\documentclass[11 pt]{amsart}
\usepackage{amsfonts}
\usepackage{enumerate}
\usepackage{amssymb,latexsym}
\usepackage{amsthm,amsmath}
\usepackage[utf8]{inputenc}\usepackage[english]{babel}
\usepackage[utf8]{inputenc}
\usepackage[mathscr]{euscript}
\usepackage{hyperref}
\usepackage{tikz}
\usepackage{cancel}
\usepackage{graphicx}
\usepackage{multirow}
\usepackage{multicol}
\usepackage{wasysym}
\usepackage[all]{xy}
\usepackage{enumerate}
\usepackage{tabularx}
\usepackage{tabulary} 
\usepackage{tabularht}
\usepackage [dvipsnames] { xcolor } 
\usepackage{colortbl} 
 
\newtheorem{theorem}{Theorem}
\newtheorem{proposition}{Proposition}

\newtheorem{corollary}{Corollary}
\newtheorem{definition}{Definition}
\newtheorem{example}[theorem]{Example}
\newtheorem{remark}[theorem]{Remark}
\usepackage{babel}

\newcommand{\be}{\begin{eqnarray*}}
\newcommand{\ee}{\end{eqnarray*}}

\makeatletter
\@namedef{subjclassname@2020}{%
  \textup{2020} Mathematics Subject Classification}
\makeatother
 
\title[ Bi-atomic systems in Hilbert spaces]{ Bi-atomic systems in Hilbert spaces }
 
\makeatletter
\def\@settitle{\begin{center}{\LARGE\@title\par}\end{center}\vspace{-1ex}}
\def\@setauthors{%
  \begin{center}
  \authors
  \end{center}\vspace{1ex}}
\makeatother
 
\title[Bi-atomic systems in Hilbert spaces]{Bi-atomic systems in Hilbert spaces}
 
\author[Ferrer Villar Osmin  et al.]{%
 Osmin Ferrer Villar\textsuperscript{1,*}\quad
 Jos\'e Naranjo Mart\'inez\textsuperscript{2}\quad
 Breiner Rivero Reyes\textsuperscript{3}\\[6pt]
 \small \textsuperscript{1}Institute Modeling and Decision Sciences (MODS Institute),\\
 \small Monteria, Colombia\\
 \small Orcid iD: https://orcid.org/0000-0003-1015-6271.\\[4pt]
 \small \textsuperscript{2}Center for Basic Sciences, School of Engineering and Architecture,\\
 \small Pontifical Bolivarian University, Monteria, Colombia\\
 \small Orcid iD: https://orcid.org/0000-0003-1685-9314\\[4pt]
 \small \textsuperscript{3}Department of Mathematics, Faculty of Education and Sciences,\\
 \small University of Sucre, Sincelejo, Colombia.\\
 \small Orcid iD: https://orcid.org/0009-0001-8225-8157\\[5pt]
 \small \texttt{osmin.ferrer@modsinstitute.org};\quad
 \texttt{jose.naranjom@upb.edu.co}\\
 \small \texttt{breinerjavierriveroreyes@gmail.com}\\[4pt]
 \small \textsuperscript{*}osmin.ferrer@modsinstitute.org}
 
\begin{document}

\begin{abstract}
In this paper, we introduce and study bi-atomic systems
in Hilbert spaces, a natural generalization of classical
atomic systems; this new concept allows us to represent bounded linear operators using two Bessel sequences simultaneously. Furthermore, we establish equivalent characterizations of these systems,
study their relationship with Bessel bi-frames, and prove
the stability of the class under natural operations:
duality with respect to the adjoint, orthogonal projections,
unitary operators, and concatenation.
\keywords{Hilbert space,  Bessel sequences, Bi-atomic system,  Bimarcos.}
\end{abstract}

\subjclass[2020]{Primary  46C20, 46C99, 47B50; Secondary  46B15}

\maketitle

\section{Introduction}
The representation of elements of a Hilbert space by sequences has been a central topic in functional analysis since the mid-20th century. Orthonormal bases, Riesz bases, and, more recently, frames have provided different ways to decompose any vector in the space in terms of a given family \cite{Casazza, Christensen2003, Duffin, w-marcos en Krein}. Each of these concepts imposes different conditions on the family: uniqueness, stability, redundancy, or exact reconstruction, depending on the requirements of the problem, as in the work of Ferrer, Sierra, and Sanabria \cite{Softframes}, who study soft frames in soft spaces.

A concept that has gained attention over the past two decades is that of the \textit{atomic system for an operator}. Introduced by Feichtinger and Werther \cite{Feichtinger2001}, this structure allows the action of a bounded linear operator \ on any element of the space to be represented by a generating sequence and a sequence of Bessel coefficients, without requiring the family to be a frame. This approach has proven useful in contexts where the structure of the operator is more relevant than that of the generating family.

An extension of atomic systems has been developed by Ferrer, Arroyo, and Naranjo in the context of Krein spaces \cite{FerrerArroyo2023}. This work shows how the notion of an atomic system can be formulated in spaces with indefinite metrics, which extends the scope of application of these structures beyond classical Hilbert spaces. In this paper, however, we focus on Hilbert spaces and on a generalization involving two sequences instead of one: bi-atomic systems.

The manuscript is organized as follows: the first section presents the definitions and classical results that will be used throughout the paper. In the second section, we introduce the definition of a bi-atomic system for a bounded operator, study the symmetry of the concept, and present a concrete example in $\mathbb{C}^{2}$. In the third section, an equivalent characterization of bi-atomic systems is established in terms of the boundedness of the coefficients and the convergence of series. In the final section the most important for the development of this research we study the stability properties of bi-atomic systems under duality, orthogonal projections, unitary operators, and concatenation, and establish their relationship with Bessel biframes. Finally, the conclusions and future directions of this work are presented.

\section{Preliminares}
\label{Sec:2}
In this section, we have compiled the definitions and
classic results that will be used throughout this paper.

\begin{definition}\cite{Kreyszig1991}
Let $(X,\|\cdot \|_X)$ and $(Y,\|\cdot\|_Y)$ be normed spaces over $\mathbb{C}$, and let ${T:X\rightarrow Y}$ be a linear operator. $T$ is said to be \textbf{\textit{bounded}} if there exists a constant $c>0$ such that for every $x\in X$, $\|T(x)\|_{Y}\leq c\|x\|_{X}$ holds.
\end{definition}

\begin{proposition}\cite{Conway1985, Kreyszig1991}\label{prop:adjunto}
Let $(H_1,\langle\cdot,\cdot\rangle_1)$ and $(H_2,\langle\cdot,\cdot\rangle_2)$ be Hilbert spaces, and let $A\in B(H_1,H_2)$. Then there exists a unique operator $A^*\in B(H_2,H_1)$ such that $\text{for all }x\in H_1,\ y\in H_2$
\[
\langle Ax,y\rangle_2=\langle x,A^*y\rangle_1.
\]
Furthermore, $\|A^*\|=\|A\|$.
\end{proposition}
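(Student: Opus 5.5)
The plan follows the standard Riesz representation argument. Fix $y\in H_2$ and consider the map $\varphi_y:H_1\to\mathbb{C}$ defined by $\varphi_y(x)=\langle Ax,y\rangle_2$.

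\textbf{Step 1: $\varphi_y$ is a bounded linear functional.} Linearity follows from the linearity of $A$ and of the inner product in its first argument. By the Cauchy--Schwarz inequality and the boundedness of $A$,
\[
|\varphi_y(x)|\le \|Ax\|_2\|y\|_2\le \|A\|\,\|y\|_2\,\|x\|_1 .
\]
Hence $\varphi_y$ is bounded with $\|\varphi_y\|\le\|A\|\,\|y\|_2$.

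\textbf{Step 2: define $A^*$ and establish its properties.} By the Riesz representation theorem, there is a unique $z_y\in H_1$ with $\varphi_y(x)=\langle x,z_y\rangle_1$ for all $x$, and $\|z_y\|_1=\|\varphi_y\|$. We set $A^*y:=z_y$, which gives the defining identity immediately.
\begin{itemize}
\item \emph{Linearity.} Compare $\langle x,A^*(\alpha y+\beta y')\rangle_1$ with $\langle x,\alpha A^*y+\beta A^*y'\rangle_1$. Both equal $\overline{\alpha}\langle Ax,y\rangle_2+\overline{\beta}\langle Ax,y'\rangle_2$, using conjugate-linearity in the second slot. Since they agree for every $x$, the two vectors coincide.
\item \emph{Boundedness.} Step 1 gives $\|A^*y\|_1\le\|A\|\,\|y\|_2$, so $A^*\in B(H_2,H_1)$ with $\|A^*\|\le\|A\|$.
\item \emph{Uniqueness.} If $B$ also satisfies the identity, then $\langle x,(A^*-B)y\rangle_1=0$ for all $x$. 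Taking $x=(A^*-B)y$ forces $A^*y=By$.
\end{itemize}

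\textbf{Step 3: equality of norms.} For the reverse inequality, the plan is to compute
\[
\|Ax\|_2^2=\langle Ax,Ax\rangle_2=\langle x,A^*Ax\rangle_1\le\|x\|_1\,\|A^*\|\,\|Ax\|_2 .
\]
Dividing by $\|Ax\|_2$ when it is nonzero gives $\|Ax\|_2\le\|A^*\|\,\|x\|_1$, so $\|A\|\le\|A^*\|$.

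This is the only step requiring care: the direction $\|A\|\le\|A^*\|$ needs the identity applied back to $A$ itself. An alternative route is to show $(A^*)^*=A$ by uniqueness and then apply $\|A^*\|\le\|A\|$ to $A^*$. Either way, the two inequalities give $\|A^*\|=\|A\|$.
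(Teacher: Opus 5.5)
Your proof is correct and complete. It takes $A^*y$ to be the Riesz representer of $x\mapsto\langle Ax,y\rangle_2$, verifies linearity, boundedness and uniqueness, and then gets $\|A\|\le\|A^*\|$ from $\|Ax\|_2^2=\langle x,A^*Ax\rangle_1$, which is the standard argument. The paper gives no proof of its own and simply cites Conway and Kreyszig, whose proofs rest on the same Riesz-representation idea (packaged there through bounded sesquilinear forms), so your route is essentially the intended one.
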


\begin{definition}\cite{Conway1985, Kreyszig1991} The {\bf space of square-summable sequences} is defined by
\begin{eqnarray*}
\ell^2(\mathbb{N}):=\left\lbrace\{\alpha_{n}\}_{n\in\mathbb{N}}\subset\mathbb{C}\big/ \sum_{n\in\mathbb{N}}\vert \alpha_{n}\vert ^{2}<\infty\right\rbrace .
\end{eqnarray*}
which has the structure of a Hilbert space with the inner product
\begin{equation*}
    \langle \cdot,\cdot\rangle:\ell^{2}(\mathbb{N})\times \ell^{2}(\mathbb{N})\rightarrow \mathbb{C},\,\, defined \,\, by
\end{equation*}
\begin{equation*}
    \left\langle \{ \alpha_{n}\}_{n\in\mathbb{N}},\{\beta_{n}\}_{n\in\mathbb{N}}\right\rangle_{\ell^{2}(\mathbb{N})}=\sum_{n\in \mathbb{N}}  \alpha_{n}\overline{\beta_{n}} \,\,\,\,\ for \,\, \{ \alpha_{n}\}_{n\in\mathbb{N}},\{\beta_{n}\}_{n\in\mathbb{N}}\in \ell^{2}(\mathbb{N}).
\end{equation*}
\end{definition}

\begin{remark}
\begin{align*}
 \| \{\alpha_n\}_{n \in \mathbb{N}} \|_{\ell^2(\mathbb{N})}
= \sqrt{\langle  \{\alpha_n\}_{n \in \mathbb{N}}, \{\alpha_n\}_{n \in \mathbb{N}} \rangle_{\ell^{2}(\mathbb{N})}}=\sqrt{\sum_{n \in \mathbb{N}} \alpha_{n}\overline{\alpha_{n}}}=\sqrt{\sum_{n \in \mathbb{N}} |\alpha_{n}|^{2}}.   
\end{align*}

\end{remark}

\begin{definition}\cite{Conway1985}\label{norma l2xl2}
The product space
\[
\ell^2(\mathbb{N})\times\ell^2(\mathbb{N}),
\]
identified with the Hilbert direct sum
\[
\ell^2(\mathbb{N})\oplus\ell^2(\mathbb{N}),
\]
is a Hilbert space when endowed with the inner product
\[
\begin{aligned}
&\left\langle
(\{c_n\}_{n \in \mathbb{N}},\{d_n\}_{n \in \mathbb{N}}),
(\{e_n\}_{n \in \mathbb{N}},\{f_n\}_{n \in \mathbb{N}})
\right\rangle\\
&=
\left\langle \{c_n\}_{n \in \mathbb{N}},\{e_n\}_{n \in \mathbb{N}}\right\rangle_{\ell^2(\mathbb{N})}
+\
\left\langle \{d_n\}_{n \in \mathbb{N}},\{f_n\}_{n \in \mathbb{N}}\right\rangle_{\ell^2(\mathbb{N})}=
\sum_{n\in\mathbb{N}}c_n\overline {e_n}
+
\sum_{n\in\mathbb{N}}d_n\overline{f_n}.
\end{aligned}
\]
The norm induced by this inner product is given by
\[
\left\|(\{c_n\}_{n \in \mathbb{N}},\{d_n\}_{n \in \mathbb{N}})\right\|_{\ell^2(\mathbb{N})\times\ell^2(\mathbb{N})}
=
\left(
\left\|\{c_n\}_{n \in \mathbb{N}}\right\|_{\ell^2(\mathbb{N})}^{2}
+
\left\|\{d_n\}_{n \in \mathbb{N}}\right\|_{\ell^2(\mathbb{N})}^{2}
\right)^{1/2}.
\]

\end{definition}

\begin{theorem}
\cite{Conway1985, Kreyszig1991}
\label{Teorema de Banach-Steinhaus}
    Let $X$ be a Banach space, $Y$ a normed space,
    and \\ $\{T_\alpha\}_{\alpha \in A} \subset \mathbf{B}(X, Y)$
    a family of bounded linear operators.
    If for every $x \in X$ we have
    $\sup\limits_{\alpha}\|{T_\alpha(x)}\| < \infty$,
    then $\sup\limits_{\alpha}\|{T_\alpha}\| < \infty.$

    In particular, if $\{T_k\}_{k \in \mathbb{N}} \subset \mathbf{B}(X,Y)$
    converges pointwise to an operator\\ $T: X \to Y$,
    then $T$ is linear and bounded.
\end{theorem}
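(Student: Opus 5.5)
The plan is to prove the two assertions in order. The second relies on the first together with a limit argument.

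\emph{Uniform boundedness.} Fix $x\in X$ with $\sup_{\alpha}\|T_\alpha x\|<\infty$. For each $n\in\mathbb{N}$ consider
\[
F_n=\{x\in X:\ \|T_\alpha x\|\le n \text{ for all } \alpha\in A\}=\bigcap_{\alpha}\{x:\|T_\alpha x\|\le n\}.
\]
Each $F_n$ is closed, since each $T_\alpha$ is continuous. The hypothesis gives $X=\bigcup_n F_n$. Because $X$ is complete, Baire's category theorem yields some $F_{n_0}$ containing a closed ball $\overline{B}(x_0,r)$ with $r>0$.

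Now take any $y$ with $\|y\|\le 1$. Then $x_0+ry\in F_{n_0}$, and linearity gives
\[
\|T_\alpha y\|=\tfrac1r\|T_\alpha(x_0+ry)-T_\alpha x_0\|\le \tfrac{2n_0}{r}.
\]
Taking the supremum over $\|y\|\le1$ shows $\|T_\alpha\|\le 2n_0/r$ for every $\alpha$, which is the claim.

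\emph{Pointwise limits.} Suppose $T_k x\to Tx$ for every $x$. Linearity of $T$ is immediate by passing to the limit in $T_k(ax+by)=aT_kx+bT_ky$, using continuity of the vector operations in $Y$.

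Every convergent sequence is bounded, so $\sup_k\|T_kx\|<\infty$ for each $x$. The first part then gives $M=\sup_k\|T_k\|<\infty$. By continuity of the norm,
\[
\|Tx\|=\lim_k\|T_kx\|\le M\|x\|,
\]
so $T\in\mathbf{B}(X,Y)$.

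The main obstacle is the Baire step: extracting a ball from one of the $F_n$ and then converting control on that ball into a uniform bound. Closedness of the $F_n$ and the translation $x_0+ry$ handle this. Completeness of $X$ is used only here, while $Y$ need only be normed.
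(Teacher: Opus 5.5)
Your argument is correct: it is the standard Baire-category proof of uniform boundedness, followed by the usual pointwise-limit step, which is how the cited sources (Kreyszig, Conway) prove it; the paper itself quotes the theorem without proof. One wording fix: the opening phrase ``Fix $x\in X$ with $\sup_\alpha\|T_\alpha x\|<\infty$'' should instead invoke the hypothesis for every $x\in X$, since that is what gives $X=\bigcup_n F_n$.
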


\begin{theorem}
\cite{Conway1985, Kreyszig1991}
\label{Teorema-representacion-Riesz}
    Let $(H, \langle{\cdot},{\cdot}\rangle)$ be a Hilbert space
    and $\varphi: H \to \mathbb{C}$ a bounded linear functional.
    Then there exists a unique vector $u \in H$ such that
    \[
        \varphi(x) = \langle{x},{u}\rangle
        \quad \forall\, x \in H.
    \]
    Furthermore, $\|{\varphi}\| = \|{u}\|$.
\end{theorem}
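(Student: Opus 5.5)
The statement is the Riesz representation theorem, which the paper cites as a classical result. I would prove it in three steps.

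First, handle the trivial case. If $\varphi=0$, take $u=0$.

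Second, suppose $\varphi\neq 0$ and let $N=\ker\varphi$. By the boundedness of $\varphi$, $N$ is a closed subspace of $H$, and it is proper. The projection theorem in Hilbert spaces gives $H=N\oplus N^{\perp}$ with $N^{\perp}\neq\{0\}$. Choose $z\in N^{\perp}$ with $\varphi(z)=1$. For any $x\in H$, the vector $x-\varphi(x)z$ lies in $N$, so it is orthogonal to $z$. This yields
\[
\langle x,z\rangle=\varphi(x)\|z\|^{2}.
\]
Setting $u=z/\|z\|^{2}$ then gives $\varphi(x)=\langle x,u\rangle$ for all $x\in H$. The main step, and the only one that genuinely uses completeness, is the existence of a nonzero vector in $N^{\perp}$. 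This is exactly where the projection theorem onto closed subspaces is needed.

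Third, prove uniqueness. If $\langle x,u\rangle=\langle x,u'\rangle$ for all $x$, take $x=u-u'$. This gives $\|u-u'\|^{2}=0$, so $u=u'$.

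Finally, establish the norm equality. Cauchy--Schwarz gives $|\varphi(x)|\le\|x\|\|u\|$, so $\|\varphi\|\le\|u\|$. Evaluating at $x=u$ gives $\varphi(u)=\|u\|^{2}$, so $\|\varphi\|\ge\|u\|$ when $u\neq0$. The case $u=0$ is trivial.
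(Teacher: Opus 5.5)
Your proof is correct: the paper gives no proof of its own and only cites Conway and Kreyszig. Your kernel/orthogonal-complement argument (pick $z\in(\ker\varphi)^{\perp}$ with $\varphi(z)=1$, note $x-\varphi(x)z\in\ker\varphi$, set $u=z/\|z\|^{2}$, then Cauchy--Schwarz and evaluation at $u$ for $\|\varphi\|=\|u\|$) is essentially the standard proof given in those references, with uniqueness handled correctly as well.
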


\begin{theorem}\cite{Douglas} \label{thm: Douglas}
  Let  $( H_{1}, \langle\cdot,\cdot \rangle_{1}), ( H_{2}, \langle\cdot, \cdot \rangle_{2})$ 
  and $( H, \langle\cdot,\cdot \rangle) $ are Hilbert spaces and  $T_1\in{B}({H}_1,{H}), T_2\in{B}({H}_2, {H})$ 
  bounded operators. The following statements are equivalent:
  \begin{itemize}
      \item [(i)] ${R}(T_1)\subset{R}(T_2);$
      \item[(ii)] $T_1T_1^*\leq \lambda^2T_2T_2^*$ for some $\lambda \geq 0$ and 
      \item[(iii)] There exist a bounded operator   $X\in {L}({H}_1,H_{2})$ such that $T_1=T_2X.$
  \end{itemize}
  \end{theorem}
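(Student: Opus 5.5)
The plan is to prove the cycle (i)$\Rightarrow$(iii)$\Rightarrow$(ii)$\Rightarrow$(i). It runs through the operator $X=T_2^{-1}T_1$ built from the Moore--Penrose type restriction of $T_2$, together with the adjoint identity $\|T_j^*y\|^2=\langle T_jT_j^*y,y\rangle$ supplied by Proposition \ref{prop:adjunto}.

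\emph{(iii)$\Rightarrow$(ii) and (iii)$\Rightarrow$(i).} If $T_1=T_2X$ with $X$ bounded, then $R(T_1)\subset R(T_2)$ immediately. Moreover, by Proposition \ref{prop:adjunto}, $T_1T_1^*=T_2XX^*T_2^*$. Hence for every $y\in H$,
\[
\langle T_1T_1^*y,y\rangle=\|X^*T_2^*y\|^2\le\|X\|^2\|T_2^*y\|^2=\|X\|^2\langle T_2T_2^*y,y\rangle ,
\]
which gives (ii) with $\lambda=\|X\|$.

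\emph{(ii)$\Rightarrow$(iii).} Assuming (ii), we have $\|T_1^*y\|\le\lambda\|T_2^*y\|$ for all $y$. Define $D:R(T_2^*)\to H_1$ by $D(T_2^*y)=T_1^*y$. This is well defined, because $T_2^*y=0$ forces $T_1^*y=0$, and it is linear with $\|D\|\le\lambda$. Extend $D$ by continuity to $\overline{R(T_2^*)}$ and by $0$ on $R(T_2^*)^\perp$, obtaining $D\in B(H_2,H_1)$ with $DT_2^*=T_1^*$. Taking adjoints and setting $X=D^*\in B(H_1,H_2)$ gives $T_1=T_2X$.

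\emph{(i)$\Rightarrow$(iii).} This is the main obstacle, since boundedness of $X$ has to be extracted from a purely set-theoretic inclusion. Let $\tilde T_2$ be the restriction of $T_2$ to $N(T_2)^\perp$, which is injective with range $R(T_2)$. For $x\in H_1$, (i) says that $T_1x\in R(T_2)$, so we may set $Xx=\tilde T_2^{-1}T_1x\in N(T_2)^\perp$. Then $X$ is linear and $T_2X=T_1$.

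To show $X$ is bounded, I will apply the closed graph theorem rather than Banach--Steinhaus (Theorem \ref{Teorema de Banach-Steinhaus}). Suppose $x_n\to x$ and $Xx_n\to z$. Since $N(T_2)^\perp$ is closed, $z\in N(T_2)^\perp$. Also $T_2Xx_n=T_1x_n\to T_1x$, while by continuity $T_2Xx_n\to T_2z$, so $T_2z=T_1x=T_2Xx$. Thus $z-Xx$ lies in $N(T_2)\cap N(T_2)^\perp=\{0\}$, so $z=Xx$. The graph of $X$ is therefore closed, and $X$ is bounded. This closes the cycle of implications.
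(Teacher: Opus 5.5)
Your proof is correct. The paper gives no proof of this statement (it is quoted from Douglas), and your argument is essentially Douglas's original one:
\begin{itemize}
\item for (ii)$\Rightarrow$(iii), the densely defined map $T_2^*y\mapsto T_1^*y$, bounded by $\lambda$, is extended to $\overline{R(T_2^*)}$ and by zero on its orthogonal complement, and then adjointed;
\item for (i)$\Rightarrow$(iii), you invert $T_2$ restricted to $N(T_2)^\perp$ and get boundedness from the closed graph theorem.
\end{itemize}
One cosmetic point: you announce the cycle (i)$\Rightarrow$(iii)$\Rightarrow$(ii)$\Rightarrow$(i), but what you actually prove is (i)$\Leftrightarrow$(iii) and (ii)$\Leftrightarrow$(iii). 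This of course suffices. Note also that the only implication the paper uses, in the proof of Theorem~\ref{equivalencia para biatomicos} with $T_1=T$ and $T_2=W$, is (i)$\Rightarrow$(iii). So your closed-graph step is exactly the part that matters there.
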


  \begin{definition}
\cite{WFE}\label{fK}
 Let $(H, \langle \cdot,\cdot\rangle)$ be a Hilbert space, a sequence $\mathbb{X}=\left\{x_{n}\right\}_{n\in \mathbb{ N}}\subset H$ is called a \emph{Bessel sequence} for the Hilbert space, if there exists a constant $0< \beta< \infty$ such that
\begin{equation}
 \sum_{n\in \mathbb{N}}\left|\langle x,x_{n}\rangle\right|^{2} \,\leq\,
\beta \| x\|^{2}\quad \text{for all} \  \,x\in{K}.  \label{m1}
\end{equation}
\end{definition}

\begin{proposition}\label{resultado sucesiones de Bessel}
\cite{Christensen2003} Let $\left(\mathcal{H},\langle \cdot,\cdot\rangle_{\mathcal{H}}\right)$ be a Hilbert space and $\left\{x_{n}\right\}_{n\in \mathbb{N}}$ a Bessel sequence; the operator
\begin{eqnarray*}
    T:&\ell_{2}(\mathbb{N})&\longrightarrow \mathcal{H}\\
    &\left\{\alpha_{n}\right\}_{n\in \mathbb{N}}&\longrightarrow T(\left\{\alpha_n\right\}_{n\in \mathbb{N}})= \sum_{n\in \mathbb{N}}  \alpha_{n} x_{n}
\end{eqnarray*}
is bounded and well-defined.\\

Furthermore, $T$ has an adjoint given by

\begin{eqnarray*}
T^{\ast}:&\mathcal{H}&\longrightarrow \ell_{2}(\mathbb{N})\\
&x& \longrightarrow T^{\ast}x=\left\{ \langle x,x_{n}\rangle \right\}_{n\in \mathbb{N}}.
\end{eqnarray*}
which is also bounded.
\end{proposition}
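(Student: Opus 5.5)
We prove that the synthesis operator $T$ is well defined and bounded, and that its adjoint is the analysis map, working directly from the Bessel inequality of Definition~\ref{fK} with bound $\beta$.

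\textbf{Step 1: the partial sums are Cauchy.} Fix $\{\alpha_n\}_{n\in\mathbb{N}}\in\ell^2(\mathbb{N})$ and, for $m>k$, set $s_m=\sum_{n=1}^m\alpha_nx_n$. The norm of $s_m-s_k$ is computed by duality:
\[
\|s_m-s_k\|=\sup_{\|y\|=1}\Big|\Big\langle \sum_{n=k+1}^m\alpha_nx_n,y\Big\rangle\Big|=\sup_{\|y\|=1}\Big|\sum_{n=k+1}^m\alpha_n\langle x_n,y\rangle\Big|.
\]
Cauchy--Schwarz in $\mathbb{C}^{m-k}$ bounds the right-hand side by $\big(\sum_{n=k+1}^m|\alpha_n|^2\big)^{1/2}\big(\sum_{n}|\langle y,x_n\rangle|^2\big)^{1/2}$. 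The Bessel inequality bounds the second factor by $\sqrt{\beta}$, so
\[
\|s_m-s_k\|\le\sqrt{\beta}\Big(\sum_{n=k+1}^m|\alpha_n|^2\Big)^{1/2}.
\]
This tends to $0$ because $\{\alpha_n\}_{n\in\mathbb{N}}\in\ell^2(\mathbb{N})$, so the partial sums form a Cauchy sequence. Completeness of $\mathcal{H}$ gives convergence, and $T$ is well defined.

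\textbf{Step 2: linearity and boundedness.} Linearity of $T$ follows from linearity of the partial sums and of limits. Repeating the estimate of Step~1 with $k=0$ and letting $m\to\infty$ gives
\[
\|T\{\alpha_n\}_{n\in\mathbb{N}}\|\le\sqrt{\beta}\,\|\{\alpha_n\}_{n\in\mathbb{N}}\|_{\ell^2(\mathbb{N})},
\]
so $T$ is bounded. Alternatively, the truncated operators $T_m$ converge pointwise to $T$, and Theorem~\ref{Teorema de Banach-Steinhaus} yields boundedness.

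\textbf{Step 3: the adjoint.} Proposition~\ref{prop:adjunto} guarantees that $T^*$ exists and is bounded. To identify it, we use continuity of the inner product to pass to the limit termwise:
\[
\langle T\{\alpha_n\}_{n\in\mathbb{N}},x\rangle=\sum_{n\in\mathbb{N}}\alpha_n\langle x_n,x\rangle=\sum_{n\in\mathbb{N}}\alpha_n\overline{\langle x,x_n\rangle}.
\]
The sequence $\{\langle x,x_n\rangle\}_{n\in\mathbb{N}}$ lies in $\ell^2(\mathbb{N})$ by the Bessel inequality. Hence the last sum is exactly $\langle\{\alpha_n\}_{n\in\mathbb{N}},\{\langle x,x_n\rangle\}_{n\in\mathbb{N}}\rangle_{\ell^2(\mathbb{N})}$. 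By the uniqueness in Proposition~\ref{prop:adjunto}, $T^*x=\{\langle x,x_n\rangle\}_{n\in\mathbb{N}}$.

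The main obstacle is Step~1: proving convergence of $\sum_n\alpha_nx_n$ without orthogonality. Expanding $\|s_m-s_k\|^2$ directly produces cross terms that cannot be controlled, and the duality trick above avoids them.
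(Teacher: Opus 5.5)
Your proof is correct and takes essentially the same approach as the paper. The paper does not prove this cited proposition itself, but it uses your Step~1 estimate (norm as a supremum over unit vectors, Cauchy--Schwarz, then the Bessel bound $\sqrt{\beta}$) in part $(b)$ of $(i)\Rightarrow(ii)$ in Theorem~\ref{equivalencia para biatomicos}, and your identification of $T^{*}$ (continuity of the inner product plus uniqueness of the adjoint) is the standard argument from the cited source.
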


\begin{definition}\cite{Feichtinger2001, FerrerArroyo2023}
 Let $(H, \langle \cdot,\cdot\rangle)$ be a Hilbert space $T\in \mathcal{B}({H}).$ We say that $\{x_n\}_{n\in \mathbb{N}}\subset {H}$ is an atomic system for $T$ in $H$ if:
\begin{itemize}

\item [(i)]$\{x_n\}_{n\in \mathbb{N}}$ is  a Bessel sequence for $H$.

\item [(ii)] There exist a Bessel sequence  $\{y_n\}_{n\in \mathbb{N}}$ in the Hilbert space  such that

    $$\displaystyle{T(x)=\sum_{n \in \mathbb{N}}  \langle x, y_n\rangle  x_n} \ \ \text{for all}\ x\in H.$$
    \end{itemize}
\end{definition}

\begin{definition}\label{Def.Marco}\cite{Christensen2003}
 Let $(\mathcal{H}, \langle \cdot, \cdot \rangle )$ be a Hilbert space and
 $\{x_n\}_{n \in \mathbb{N}}$  a sequence  of elements of $\mathcal{H}$. It is called a \emph{frame} for $\mathcal{H}$ if there exist constants $\lambda ,\beta>0$ such that

\begin {equation}
\lambda \|x\|^{2}\leq\sum_{n \in \mathbb{N}}|\langle x,x_n\rangle|^{2}\leq \beta\|x\|^{2},
\qquad \forall\,x\in\mathcal{H}.
\end{equation}
\end{definition}

\begin{definition}\label{def bimarco}\cite{Parizi2023}
Let $(H, \langle \cdot,\cdot\rangle)$ be a Hilbert space.
$\mathbb{X} = \{x_{n}\}_{n \in \mathbb{N}}$, $\mathbb{Y} = \{y_{n}\}_{n \in \mathbb{N}}$ are sequences of points in $\mathcal{H}$.
If there exist constants $0<\lambda\le \beta<\infty$ such that
\[
\lambda \|x\|^{2}
\le
\sum_{n \in \mathbb{N}} \langle x,x_n\rangle \,\langle y_n,x\rangle
\le
\beta \|x\|^{2},
\quad \forall x \in \mathcal{H},
\]
then $(\mathbb{X},\mathbb{Y})$ is said to be a bimarco for $\mathcal{H}$. When the sequences $\mathbb{X}$ and $\mathbb{Y}$ are Bessel sequences, we refer to a Bessel bimarco.
\end{definition}

\begin{definition}\label{Operador-bimarco}\cite{Parizi2022, Parizi2023}
Let $(\mathcal{H}, \langle \cdot, \cdot \rangle )$ be a Hilbert space, $\mathbb{X}=\{x_n\}_{n \in \mathbb{N}}$, $\mathbb{Y}=\{y_n\}_{n \in \mathbb{N}}$ sequences in $\mathcal{H}$, and $(\mathbb{X},\ \mathbb{Y}$ a bi-frame for $(\mathcal{H}, \langle\cdot,\cdot\rangle)$. The \textbf{b-frame operator} $S_{\mathbb{X}\mathbb{Y}}$ is defined as follows:
\begin{align*}
S_{{\mathbb{X}\mathbb{Y}}}:\ & \mathcal{H}\to \mathcal{H}\\
&x \to S_{\mathbb{X}\mathbb{Y}}(x):= \sum_{n \in \mathbb{N}} \langle x,x_{n}\rangle y_{n}.
\end{align*}
\end{definition}

\begin{proposition}\label{propiedades del operador bimarco}\cite{Parizi2022}
  Let $(\mathcal{H}, \langle\cdot \rangle )$ a Hilbert space, $\mathbb{X}=\{x_n\}_{n \in \mathbb{N}}$, $\mathbb{Y}=\{y_n\}_{n \in \mathbb{N}}$ sequences in $\mathcal{H}$, and $(\mathbb{X},\mathbb{Y})$ a bimarco for $(\mathcal{H}, \langle\cdot,\cdot\rangle),$ then the bimarco operator $S_{\mathbb{X}\mathbb{Y}}$ is well-defined, is bounded and positive, and invertible.
\end{proposition}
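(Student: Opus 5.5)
The plan is to write $S_{\mathbb{X}\mathbb{Y}}$ as a product of synthesis and analysis operators, which gives well-definedness and boundedness. The lower bound in the definition of a bimarco (Definition \ref{def bimarco}) then does the rest: it gives positivity directly, and it makes $S_{\mathbb{X}\mathbb{Y}}$ bounded below, which leads to invertibility. I will work in the setting where the convergence of $\sum_{n}\langle x,x_n\rangle y_n$ is meaningful, namely when $\mathbb{X}$ and $\mathbb{Y}$ are Bessel sequences (a Bessel bimarco). In the general case I would add this as a standing hypothesis, since the bimarco inequalities alone control only the scalar series $\sum_n \langle x,x_n\rangle\langle y_n,x\rangle$.

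\textbf{Step 1: well-definedness and boundedness.} Let $T_{\mathbb{X}},T_{\mathbb{Y}}:\ell^2(\mathbb{N})\to\mathcal{H}$ be the synthesis operators from Proposition \ref{resultado sucesiones de Bessel}, with Bessel bounds $\beta_{\mathbb{X}}$ and $\beta_{\mathbb{Y}}$. The same proposition gives $T_{\mathbb{X}}^*x=\{\langle x,x_n\rangle\}_{n\in\mathbb{N}}\in\ell^2(\mathbb{N})$. Hence
\[
S_{\mathbb{X}\mathbb{Y}}=T_{\mathbb{Y}}T_{\mathbb{X}}^*,
\]
so the defining series converges for every $x$, and $\|S_{\mathbb{X}\mathbb{Y}}\|\le \sqrt{\beta_{\mathbb{X}}\beta_{\mathbb{Y}}}$. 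Linearity is immediate.

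\textbf{Step 2: positivity.} By continuity of the inner product,
\[
\langle S_{\mathbb{X}\mathbb{Y}}x,x\rangle=\sum_{n\in\mathbb{N}}\langle x,x_n\rangle\langle y_n,x\rangle .
\]
The bimarco inequality then gives $\lambda\|x\|^2\le \langle S_{\mathbb{X}\mathbb{Y}}x,x\rangle\le\beta\|x\|^2$. In particular $\langle S_{\mathbb{X}\mathbb{Y}}x,x\rangle$ is real and nonnegative for every $x$. Since $\mathcal{H}$ is a complex Hilbert space, the polarization identity shows that an operator whose quadratic form is real-valued is self-adjoint. So $S_{\mathbb{X}\mathbb{Y}}=S_{\mathbb{X}\mathbb{Y}}^*\ge 0$; in fact $\lambda I\le S_{\mathbb{X}\mathbb{Y}}\le \beta I$.

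\textbf{Step 3: invertibility.} By Cauchy--Schwarz, $\lambda\|x\|^2\le\langle S_{\mathbb{X}\mathbb{Y}}x,x\rangle\le\|S_{\mathbb{X}\mathbb{Y}}x\|\,\|x\|$, so
\[
\|S_{\mathbb{X}\mathbb{Y}}x\|\ge\lambda\|x\| .
\]
Thus $S_{\mathbb{X}\mathbb{Y}}$ is injective and has closed range, because a Cauchy sequence $S_{\mathbb{X}\mathbb{Y}}x_k$ forces $\{x_k\}$ to be Cauchy. Since $S_{\mathbb{X}\mathbb{Y}}$ is self-adjoint,
\[
R(S_{\mathbb{X}\mathbb{Y}})^{\perp}=\ker S_{\mathbb{X}\mathbb{Y}}^*=\ker S_{\mathbb{X}\mathbb{Y}}=\{0\},
\]
so the range is dense, and being closed it is all of $\mathcal{H}$. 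The inverse is bounded with $\|S_{\mathbb{X}\mathbb{Y}}^{-1}\|\le 1/\lambda$, either from the lower bound directly or from the bounded inverse theorem.

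The step I expect to be the main obstacle is Step 1 when the Bessel property is not assumed. In that case the natural alternative is to define $S_{\mathbb{X}\mathbb{Y}}$ weakly, through the sesquilinear form $B(x,z)=\sum_n\langle x,x_n\rangle\langle y_n,z\rangle$, and obtain the vector via Theorem \ref{Teorema-representacion-Riesz}. This requires convergence of $B(x,z)$ for $x\neq z$ and a bound $|B(x,z)|\le C\|x\|\|z\|$, which polarization supplies only if $B$ is already known to be Hermitian. So I would state the Bessel assumption explicitly. If pointwise convergence is assumed but not the Bessel property, I would argue that the partial-sum operators $x\mapsto\sum_{n\le k}\langle x,x_n\rangle y_n$ converge pointwise, and invoke Theorem \ref{Teorema de Banach-Steinhaus} to obtain boundedness.
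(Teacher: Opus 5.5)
\textbf{Overall.} The paper gives no proof of this proposition; it is imported by citation, so there is no argument to compare yours with. Your proof is correct and complete for Bessel bimarcos. That is exactly the setting in which the paper later uses the result (Proposition~\ref{bimarco bessel implica biatomico}). Two useful by-products: Step~2 shows $S_{\mathbb{X}\mathbb{Y}}$ is self-adjoint, so $S_{\mathbb{X}\mathbb{Y}}=S_{\mathbb{Y}\mathbb{X}}$; and Step~3 gives the explicit bound $\|S_{\mathbb{X}\mathbb{Y}}^{-1}\|\le 1/\lambda$, which the later proof leaves as an unnamed constant $C$. Your Banach--Steinhaus fallback, used when norm convergence of the series is assumed instead, is also fine.

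\textbf{The Bessel hypothesis is necessary.} Adding it is not just a convenience: without it, the statement is false when $S_{\mathbb{X}\mathbb{Y}}x$ is read as a norm-convergent series. Take $H=\ell^2(\mathbb{N})$ with orthonormal basis $\{e_m\}_{m\in\mathbb{N}}$, and set $(x_{3m-2},y_{3m-2})=(e_m,e_m)$, $(x_{3m-1},y_{3m-1})=(e_1,e_m)$, $(x_{3m},y_{3m})=(e_1,-e_m)$.
For $x=\sum_j\xi_je_j$, the partial sums of $\sum_n\langle x,x_n\rangle\langle y_n,x\rangle$ are $\sum_{j\le m}|\xi_j|^2$, possibly plus $\xi_1\overline{\xi_m}\to 0$. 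So the series equals $\|x\|^2$, and $(\mathbb{X},\mathbb{Y})$ is a bimarco with $\lambda=\beta=1$. Here $\mathbb{Y}$ is Bessel and $\mathbb{X}$ is not.
However, the partial sums of $\sum_n\langle e_1,x_n\rangle y_n$ alternate between $e_1$ and $e_1+e_m$, so $S_{\mathbb{X}\mathbb{Y}}e_1$ does not exist.

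\textbf{Your diagnosis of the weak route is inaccurate.} The identity
$\langle Ax,z\rangle=\frac14\sum_{j=0}^{3}i^j\langle A(x+i^jz),x+i^jz\rangle$
holds for every linear operator $A$, Hermitian or not. Applied to the partial-sum operators $S_k$, it gives convergence of $\langle S_kx,z\rangle$ for all $x,z$. The limit form $B$ satisfies $|B(x,z)|\le\beta(\|x\|^2+\|z\|^2)$, hence $|B(x,z)|\le 2\beta\|x\|\,\|z\|$ after replacing $(x,z)$ by $(tx,z/t)$. It is Hermitian automatically, since $B(x,x)$ is real.
So every bimarco does yield a bounded operator $S$ with $\lambda I\le S\le\beta I$ (in the example above, $S=I$), but only as the weak limit of the partial sums. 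What the Bessel hypothesis buys is norm convergence of the defining series, not the existence or boundedness of the form.
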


\begin{remark}\label{obs:adjunto-bimarco}
Let \((\mathcal{H},\langle\cdot,\cdot\rangle)\) be a Hilbert space and \(\mathbb{X}=\{x_n\}_{n\in\mathbb{N}}\), \(\mathbb{Y}=\{y_n\}_{n\in\mathbb{N}}\) be sequences in \(\mathcal{H}\). Then the two-frame operators \(S_{\mathbb{X}\mathbb{Y}}\) and \(S_{\mathbb{Y}\mathbb{X}}\) satisfy $S_{\mathbb{X}\mathbb{Y}} = S_{\mathbb{Y}\mathbb{X}}^{*}.$
\end{remark}

Below, we present the results of this research.

\section{Bi-atomic systems in Hilbert spaces}
\label{sec:3}

\begin{definition}\label{sistema biatomico}
  Let $(H, \langle \cdot, \cdot \rangle)$ be a Hilbert space, $T\in \mathcal{B}({H})$, and let the sequences $\mathbb{X}=
  \{x_n\}_{n \in \mathbb{N}}$, $\mathbb{Y} = \{y_n\}_{n \in \mathbb{N}} \subset H$, then the pair $(\mathbb{X}, \mathbb{Y})$ is a bi-atomic system for $T$ in $H$ if:
 
\begin{enumerate}
      \item $\mathbb{X},$ $\mathbb{Y}$ are Bessel sequences.
      \item There exist $\mathbb{F}=\{f_n\}_{n \in \mathbb{N}}, \mathbb{G}= \{g_n\}_{n \in \mathbb{N}} \subset {H}$ which are Bessel sequences such that $$T(x)=\sum \limits_{n \in \mathbb{N}} \left( \langle x, f_n\rangle x_n + \langle x, g_n\rangle y_n  \right)$$
      We will refer to the sequences $\mathbb{F}, \mathbb{G}$ as auxiliary sequences for $\mathbb{X}, \mathbb{Y}$ respectively.
  \end{enumerate}
\end{definition}

\begin{remark}\label{simetria de biatomicos}
   Let $(H, \langle \cdot, \cdot \rangle)$ be a Hilbert space, $T \in \mathcal{B}({H})$, and let the sequences $\mathbb{X} = \{x_n\}_{n \in \mathbb{N}}$, $\mathbb{Y} = \{y_n\}_{n \in \mathbb{N}} \subset {H}$, then if $(\mathbb{X}, \mathbb{Y})$ is a bi-atomic system for $T$, then $(\mathbb{Y}, \mathbb{X})$ is a bi-atomic system for $T$. The proof follows immediately from the Definition \eqref{sistema biatomico}. 
\end{remark}

\begin{example}\label{ejemplo bi-atomico}
    Let $H=\mathbb{C}^2$ with the usual inner product $\langle (a,b), (c,d) \rangle = a\overline{c}+b\overline{d}$ and $T:H \to H$ defined by $T((a,b))=(2a,6b),$ which is bounded, since for all $x=(a,b) \in H$, we have that:
    $$\Vert T(x) \Vert^2 =\Vert T((a,b)) \Vert^2 = \Vert (2a,6b) \Vert^2 = 4|a|^{2}+36|b|^{2} \leq 36(|a|^{2}+ |b|^{2}) = 36\Vert x \Vert^2.$$
    \vspace{2mm}
   On the other hand, let us consider the following sequences in $\mathbb{C}^2$, $\mathbb{X}=\{x_n\}_{n \in \mathbb{N}}$,  $\mathbb{Y}=\{y_n\}_{n \in \mathbb{N}}$, given by  $x_{1}= (i,0), x_{2}=(0,3i),$ $x_{n}=(0,0),$ $y_{1}=(5i,0),$\\ $y_{2}=(0,i),$ and furthermore $x_{n}= (0,0),$ $y_{n}=(0,0)$ for $n\geq 3.$ Let’s show that $(\mathbb{X},\mathbb{Y})$ is a bi-atomic system for $T.$     Indeed,  for every $x=(a,b) \in \mathbb{C}^2,$ we have that:
        \begin{itemize}
            \item $\langle x,x_{1}\rangle = \langle (a,b),(i,0)\rangle = -ai$
            \item $\langle x_{2},x\rangle = \langle (a,b),(0,3i)\rangle = -3bi$
            \item $\langle x,x_{n}\rangle = \langle (a,b),(0,0)\rangle = 0$ for $n\geq 3$
            \item $\langle x,y_{1}\rangle = \langle (a,b),(5i,0)\rangle = -5ia$
             \item $\langle x,y_{2}\rangle = \langle (a,b),(0,i)\rangle = -bi$
              \item $\langle x,y_{n}\rangle = \langle (a,b),(0,0)\rangle = 0$ for $n\geq 3$
        \end{itemize}
       From where,
       \begin{align*}
           &\bullet \sum\limits_{n \in \mathbb{N}}|\langle x, x_{n}\rangle |^2 = |-ai|^2+|-3bi|^2 = |a|^2+9|b|^2\leq 9(|a|^2+|b|^2) = 9 \Vert (a,b) \Vert^{2}\\
           &\bullet \sum\limits_{n \in \mathbb{N}}|\langle x, y_{n}\rangle |^2 = |-5ai|^2+|-bi|^2 = 25|a|^2+|b|^2\leq 25(|a|^2+|b|^2) = 25 \Vert (a,b) \Vert^{2}
       \end{align*}

That is, $\mathbb{X}$ and $\mathbb{Y}$ are Bessel sequences.\\

Now, let’s consider the sequences $\mathbb{F}=\{f_n\}_{n \in \mathbb{N}}$ and $\mathbb{G}=\{g_n\}_{n \in \mathbb{N}}$ given by $f_{1}= (i,0), f_{2}=(0,i),$ $f_{n}=(0,0),$ $g_{1}=((1/5)i,0), g_{2}=(0,3i),$ $g_{n}=(0,0)$ for $n\geq 3.$ Then,
\begin{itemize}
            \item $\langle x,f_{1}\rangle = \langle (a,b),(i,0)\rangle = -ai$
            \item $\langle x,f_{2}\rangle = \langle (a,b),(0,i)\rangle = -bi$
            \item $\langle x,f_{n}\rangle = \langle (a,b),(0,0)\rangle = 0$ for $n\geq 3$
            \item $\langle x,g_{1}\rangle = \langle (a,b),(1/5i,0)\rangle = (-1/5)ia$
             \item $\langle x,g_{2}\rangle = \langle (a,b),(0,3i)\rangle = -3bi$
              \item $\langle x,g_{n}\rangle = \langle (a,b),(0,0)\rangle = (0,0)$ for $n\geq 3$
        \end{itemize}
Therefore:
\begin{align*}
          &\bullet  \sum\limits_{n \in \mathbb{N}}|\langle x,f_{n}\rangle |^{2}= |-ai|^{2}+|-bi|^{2}= |a|^{2}+|b|^{2} = \Vert (a,b) \vert^{2}\\
          &\bullet  \sum\limits_{n \in \mathbb{N}}|\langle x,g_{n}\rangle |^{2}= |(-1/5)ai|^{2}+|-3bi|^{2}= (1/25)|a|^{2}+9|b|^{2} \leq 9 \Vert (a,b) \Vert^{2}
       \end{align*}

        Therefore, $\mathbb{F}$ and $\mathbb{G}$ are Bessel sequences.
Finally,

\begin{align*}
     \sum_{n \in \mathbb{N}} \left( \langle x, f_n\rangle x_n  + \langle x, g_n\rangle y_n \right) &= \langle x, f_1\rangle x_1  + \langle x, g_1\rangle y_1+\langle x, f_2\rangle x_2  + \langle x, g_2\rangle y_2\\
     &= -ai(i,0)+(-1/5ai)(5i,0)+(-bi)(0,3i)+(-3bi)(0,i)\\
     &=(a,0)+(a,0)+(0,3b)+(0,3b)=(2a,6b)\\
     &=T((a,b)).
\end{align*}

Based on all of the above, it follows that $(\mathbb{X},\mathbb{Y})$ is a bi-atomic system for $T.$
    \end{example}

    \begin{theorem}\label{thm:dualidad_biatomica}
    Let $(H, \langle \cdot, \cdot \rangle)$ be a Hilbert space,
    $T \in B(H)$, and $\mathbb{X} = \{x_n\}_{n \in \mathbb{N}}$,
    $\mathbb{Y} = \{y_n\}_{n \in \mathbb{N}}$ be sequences in $H$.
    If $(\mathbb{X}, \mathbb{Y})$ is a bi-atomic system for $T$ in $H$, with auxiliary sequences $\mathbb{F} = \{f_n\}_{n \in \mathbb{N}}$ and $\mathbb{G} = \{g_n\}_{n \in \mathbb{N}},$
    then $(\mathbb{F}, \mathbb{G})$ is a bi-atomic system for $T^{*}$ in $H.$
\end{theorem}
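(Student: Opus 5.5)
The plan is to verify the two conditions of Definition \ref{sistema biatomico} for the pair $(\mathbb{F},\mathbb{G})$ and the operator $T^{*}$, using $\mathbb{X}$ and $\mathbb{Y}$ themselves as the auxiliary sequences. Condition (1) is immediate: $\mathbb{F}$ and $\mathbb{G}$ are Bessel sequences by hypothesis. So all the work is in condition (2), where we aim to show
\[
T^{*}(y)=\sum_{n\in\mathbb{N}}\left(\langle y,x_n\rangle f_n+\langle y,y_n\rangle g_n\right)\quad\text{for all } y\in H .
\]
This is a finite-to-infinite analogue of the identity $(AB^{*})^{*}=BA^{*}$, and I would make it rigorous through the synthesis operators of Proposition \ref{resultado sucesiones de Bessel}.

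First I would let $T_{\mathbb{X}},T_{\mathbb{Y}},T_{\mathbb{F}},T_{\mathbb{G}}:\ell^2(\mathbb{N})\to H$ be the synthesis operators of the four Bessel sequences. By Proposition \ref{resultado sucesiones de Bessel} they are bounded, and their adjoints are the analysis maps $T_{\mathbb{F}}^{*}x=\{\langle x,f_n\rangle\}_{n}$, and similarly for the others. Next I would observe that each of the two series $\sum_n\langle x,f_n\rangle x_n$ and $\sum_n\langle x,g_n\rangle y_n$ converges separately, since it equals $T_{\mathbb{X}}T_{\mathbb{F}}^{*}x$, respectively $T_{\mathbb{Y}}T_{\mathbb{G}}^{*}x$. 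Hence the hypothesis can be rewritten as the operator identity
\[
T=T_{\mathbb{X}}T_{\mathbb{F}}^{*}+T_{\mathbb{Y}}T_{\mathbb{G}}^{*}.
\]
Taking adjoints, using Proposition \ref{prop:adjunto} and the rules $(AB)^{*}=B^{*}A^{*}$ and $(A+B)^{*}=A^{*}+B^{*}$, gives
\[
T^{*}=T_{\mathbb{F}}T_{\mathbb{X}}^{*}+T_{\mathbb{G}}T_{\mathbb{Y}}^{*}.
\]
Evaluating at $y$ yields exactly the displayed series, with $\mathbb{X}$ auxiliary for $\mathbb{F}$ and $\mathbb{Y}$ auxiliary for $\mathbb{G}$.

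The only delicate point is the convergence bookkeeping. The definition writes a single series of sums, and we need the split into two separately convergent series, both for $T$ and for the resulting representation of $T^{*}$. The synthesis-operator viewpoint handles this: Bessel sequences make both pieces convergent in norm, so a sum of the two convergent series is the convergent series of sums.

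If one prefers an elementary check, the alternative is to compute $\langle T^{*}y,x\rangle=\langle y,Tx\rangle$ and pass the inner product through the norm-convergent series by continuity. This gives $\sum_n(\overline{\langle x,f_n\rangle}\langle y,x_n\rangle+\overline{\langle x,g_n\rangle}\langle y,y_n\rangle)=\langle \sum_n(\langle y,x_n\rangle f_n+\langle y,y_n\rangle g_n),x\rangle$. That series converges again by the Bessel property, and since $x$ is arbitrary the identity follows.
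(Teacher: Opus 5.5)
Your proof is correct, but your main route differs in method from the paper's.

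The paper works directly with inner products. It passes $\langle\cdot,z\rangle$ through the series for $T(x)$, rewrites $\langle x,f_n\rangle\langle x_n,z\rangle=\langle x,\langle z,x_n\rangle f_n\rangle$ (and likewise for the $g_n$ terms), and reads off $T^{*}(z)=\sum_{n}(\langle z,x_n\rangle f_n+\langle z,y_n\rangle g_n)$. This is exactly the ``elementary check'' in your last paragraph.

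Your primary argument instead writes the hypothesis as the operator identity $T=T_{\mathbb{X}}T_{\mathbb{F}}^{*}+T_{\mathbb{Y}}T_{\mathbb{G}}^{*}$ and takes adjoints at the operator level. This costs you one step the paper's route avoids: splitting the single series of Definition~\ref{sistema biatomico} into two separately convergent series. You handle that correctly via the Bessel property.

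In exchange, norm convergence of the series for $T^{*}z$ comes for free. Indeed, $T^{*}z=T_{\mathbb{F}}(\{\langle z,x_n\rangle\}_n)+T_{\mathbb{G}}(\{\langle z,y_n\rangle\}_n)$ is a synthesis operator of a Bessel sequence applied to an $\ell^2$ sequence, so Proposition~\ref{resultado sucesiones de Bessel} applies.

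In the paper's computation, the final step writes the scalar series as $\langle x,\sum_n(\cdots)\rangle$. That tacitly assumes the vector series converges in $H$. The inner-product identity alone only shows the partial sums converge weakly to $T^{*}z$. The Bessel property of $\mathbb{F}$ and $\mathbb{G}$ is what upgrades this to norm convergence, a point you state explicitly in your alternative.

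The paper's version is shorter and needs only continuity of the inner product once that convergence is granted. Yours makes the structure $(AB^{*})^{*}=BA^{*}$ transparent. It is also the same factorization $T=WD$ that reappears in Theorem~\ref{equivalencia para biatomicos}, where duality is simply $T^{*}=D^{*}W^{*}$.
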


\begin{proof}
    Since $(\mathbb{X}, \mathbb{Y})$ is a bi-atomic system for $T$, there exist sequences of Bessel functions $\mathbb{F} = \{f_n\}_{n \in \mathbb{N}}$ 
    and $\mathbb{G} = \{g_n\}_{n \in \mathbb{N}}$  in $H$ such that
    \begin{equation}\label{eq:rep_T}
        T(x)
        = \sum_{n \in \mathbb{N}}
          \bigl(\langle x, f_n \rangle\, x_n
               + \langle x, g_n \rangle\, y_n\bigr),
        \qquad \forall\, x \in H.
    \end{equation}
    Furthermore, by Definition\eqref{sistema biatomico}, $\mathbb{X}$ and $\mathbb{Y}$ are Bessel sequences, respectively. We must verify the conditions of the Definition \eqref{sistema biatomico}
    for the pair $(\mathbb{F}, \mathbb{G})$ with respect to the operator $T^{*}$. Indeed,\\

By hypothesis, $\mathbb{F}$ and $\mathbb{G}$ are Bessel sequences. Furthermore,
  for any $z \in H$, we have
    \begin{align*}
        \langle T(x), z \rangle
        &= \left\langle
               \sum_{n \in \mathbb{N}}
               \bigl(\langle x, f_n \rangle\, x_n
                    + \langle x, g_n \rangle\, y_n\bigr),\,
               z
           \right\rangle
        = \sum_{n \in \mathbb{N}}
           \bigl(
               \langle x, f_n \rangle \langle x_n, z \rangle
             + \langle x, g_n \rangle \langle y_n, z \rangle
           \bigr) \\
           &=\sum_{n \in \mathbb{N}}
           \bigl(
               \langle x, f_n \rangle \overline{\langle z,x_n \rangle}
             + \langle x, g_n \rangle \langle y_n, z \rangle
           \bigr) \\
        &= \left\langle
               x,\;
               \sum_{n \in \mathbb{N}}
               \bigl(
                   \langle z, x_n \rangle\, f_n
                 + \langle z, y_n \rangle\, g_n
               \bigr)
           \right\rangle.
    \end{align*}

 Since $\langle T(x), z \rangle = \langle x, T^{*}(z) \rangle$
    for all $x \in H$, it follows that
    \begin{equation}\label{eq:rep_Tstar}
        T^{*}(z)
        = \sum_{n \in \mathbb{N}}
          \bigl(
              \langle z, x_n \rangle\, f_n
            + \langle z, y_n \rangle\, g_n
          \bigr),
        \qquad \forall\, z \in H.
    \end{equation}
    This is precisely the representation required by the
    Definition \eqref{sistema biatomico} for the pair $(\mathbb{F}, \mathbb{G})$ with respect to $T^{*},$
    with auxiliary sequences $\mathbb{X} = \{x_n\}$ and $\mathbb{Y} = \{y_n\}$,
    which are Bessel sequences.
    Therefore, $(\mathbb{F}, \mathbb{G})$ is a bi-atomic system for $T^{*}$
    in $H$.
\end{proof}
The following result presents an equivalent way of showing that a pair of sequences is a bi-atomic system for a bounded linear operator.
\begin{theorem}\label{equivalencia para biatomicos}
     Let $(H, \langle \cdot, \cdot \rangle)$ be a Hilbert space, $T\in \mathcal{B}({H})$, and $\mathbb{X}=\{x_n\}_{n \in \mathbb{N}}$, $\mathbb{Y}= \{y_n\}_{n \in \mathbb{N}}$ be two sequences in $H$. The following statements are equivalent:
 \begin{itemize}
     \item[$(i)$] $(\mathbb{X},\mathbb{Y})$ is a bi-atomic system for $T$ in $H.$
     \item[$(ii)$]
     \begin{itemize}
         \item[$(a)$] There exists a constant $C>0$ such that for every $x\in H,$ there exist sequences $a_x=\{a_n\}_{n\in \mathbb{N}},$ $ b_x=\{b_n\}_{n\in \mathbb{N}} \in \ell^2(\mathbb{N})$ with  $\ \Vert a_x \Vert_{\ell^2(\mathbb{N})} \leq C \Vert x \Vert$ and  $\Vert b_x \Vert_{\ell^2(\mathbb{N})} \leq C \Vert x \Vert$ such that
     $T(x) = \sum \limits_{n \in \mathbb{N}} (a_n x_n + b_n y_n).$
     \item[$(b)$] The series $\sum \limits_{n \in \mathbb{N}}(c_nx_n+d_ny_n)$ converges in $H$ for all $\{c_n\}_{n\in \mathbb{N}}, \{d_n\}_{n\in \mathbb{N}} \in \ell^{2}(\mathbb{N}).$
     \end{itemize}
 \end{itemize}    
 \end{theorem}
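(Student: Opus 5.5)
For the implication $(i)\Rightarrow(ii)$ I will read everything off the definition. Let $\mathbb{F},\mathbb{G}$ be the auxiliary Bessel sequences, with bounds $\beta_F,\beta_G$. For $x\in H$ put $a_n=\langle x,f_n\rangle$ and $b_n=\langle x,g_n\rangle$. The Bessel inequality gives $\|a_x\|_{\ell^2(\mathbb{N})}\le\sqrt{\beta_F}\|x\|$ and $\|b_x\|_{\ell^2(\mathbb{N})}\le\sqrt{\beta_G}\|x\|$, so $(a)$ holds with $C=\max\{\sqrt{\beta_F},\sqrt{\beta_G}\}$; the representation of $T(x)$ is exactly the one in Definition \ref{sistema biatomico}. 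For $(b)$, Proposition \ref{resultado sucesiones de Bessel} says the synthesis operators $T_{\mathbb{X}}\{c_n\}=\sum c_nx_n$ and $T_{\mathbb{Y}}\{d_n\}=\sum d_ny_n$ are well defined and bounded, since $\mathbb{X}$ and $\mathbb{Y}$ are Bessel. Hence both series converge, and so does their termwise sum.

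For $(ii)\Rightarrow(i)$ the first step is to show that $\mathbb{X}$ and $\mathbb{Y}$ are Bessel. Taking $d=0$ in $(b)$ shows that $\sum c_nx_n$ converges for every $c\in\ell^2(\mathbb{N})$, and likewise for $\mathbb{Y}$. I then use the standard argument: the partial-sum operators $U_N(c)=\sum_{n\le N}c_nx_n$ are bounded from $\ell^2(\mathbb{N})$ to $H$ and converge pointwise. By Theorem \ref{Teorema de Banach-Steinhaus} the limit $U$ is bounded. Its adjoint is $U^*x=\{\langle x,x_n\rangle\}$, which one checks by pairing with finitely supported sequences. Therefore $\sum|\langle x,x_n\rangle|^2=\|U^*x\|^2\le\|U\|^2\|x\|^2$, so $\mathbb{X}$ is Bessel. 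The same argument gives a bounded synthesis operator $V$ for $\mathbb{Y}$.

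Next I construct the auxiliary sequences. The choice $x\mapsto(a_x,b_x)$ in $(a)$ need not be linear, so I cannot read coefficients off directly; this is the main obstacle. I will handle it with Douglas' lemma (Theorem \ref{thm: Douglas}). Define $W:\ell^2(\mathbb{N})\oplus\ell^2(\mathbb{N})\to H$ by $W(c,d)=Uc+Vd$. It is bounded, and by $(a)$ we have $R(T)\subset R(W)$. Theorem \ref{thm: Douglas} then yields a bounded operator $X:H\to\ell^2(\mathbb{N})\oplus\ell^2(\mathbb{N})$ with $T=WX$. The norm bound in $(a)$ is not even needed for this step. Write $X=(X_1,X_2)$ with $X_i\in B(H,\ell^2(\mathbb{N}))$.

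Finally I define the auxiliary sequences from $X$. By Theorem \ref{Teorema-representacion-Riesz}, each coordinate functional $x\mapsto(X_1x)_n$ is bounded, so there is $f_n$ with $(X_1x)_n=\langle x,f_n\rangle$; similarly there is $g_n$ with $(X_2x)_n=\langle x,g_n\rangle$. Equivalently, $f_n=X_1^*e_n$ and $g_n=X_2^*e_n$. Then $\sum|\langle x,f_n\rangle|^2=\|X_1x\|^2\le\|X_1\|^2\|x\|^2$, so $\mathbb{F}$ is Bessel, and likewise $\mathbb{G}$. Moreover $T(x)=UX_1x+VX_2x=\sum_n(\langle x,f_n\rangle x_n+\langle x,g_n\rangle y_n)$, where combining the two convergent series termwise is legitimate. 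This gives $(i)$.
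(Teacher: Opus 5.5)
Your proof is correct and follows essentially the same route as the paper. For $(i)\Rightarrow(ii)$ you both use the Bessel coefficients $\langle x,f_n\rangle,\langle x,g_n\rangle$. For $(ii)\Rightarrow(i)$ you both use Banach--Steinhaus to get a bounded synthesis operator $W$ on $\ell^2(\mathbb{N})\oplus\ell^2(\mathbb{N})$, its adjoint to get the Bessel property of $\mathbb{X},\mathbb{Y}$, Douglas' theorem from $\mathcal{R}(T)\subset\mathcal{R}(W)$, and Riesz representation of the coordinate functionals of the factor to produce $\mathbb{F},\mathbb{G}$. The differences are only cosmetic: you build $W=U\oplus V$ from separately constructed synthesis operators, whereas the paper works directly with the joint partial sums $W_k$, and you cite the Bessel synthesis proposition for $(b)$ where the paper gives an explicit Cauchy estimate.
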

 \begin{proof}

    \begin{description}
       \item[$(i) \Rightarrow (ii)$] Suppose that $(\mathbb{X},\mathbb{Y})$ is a bi-atomic system for $T$ on $H$; then there exist $\mathbb{F}=\{f_n\}_{n\in \mathbb{N}},$\ $\mathbb{G}=\{g_n\}_{n\in \mathbb{N}}$ Bessel sequences with bounds $\beta_{f}$ and $\beta_{g}$ such that for every $x\in H$, we have $T(x)=\sum \limits_{n \in \mathbb{N}} \left( \langle x, f_n\rangle x_n + \langle x, g_n\rangle y_n  \right).$ Let $a_{x}=\{\langle x, f_n\rangle\}_{n\in \mathbb{N}},$ where $a_{n}(x)=\langle x, f_n\rangle$ and $b_{x}=\{\langle x, g_n\rangle\}_{n\in \mathbb{N}},$ where $b_{n}(x)=\langle x, g_n\rangle,$ then
       \begin{align*}
           &\bullet  \Vert \{\langle x,f_n\rangle\} \Vert_{\ell^{2}(\mathbb{N})}^2 = \sum_{n \in \mathbb{N}}|\langle x,f_n\rangle|^2 \leq \beta_{f} \Vert x \Vert^{2}\Longrightarrow \Vert a_x \Vert_{\ell^{2}(\mathbb{N})} \leq \sqrt{\beta_f}\Vert x \Vert\\
           &\bullet  \Vert \{\langle x,g_n\rangle\} \Vert_{\ell^{2}(\mathbb{N})}^2 = \sum_{n \in \mathbb{N}}|\langle x,g_n\rangle|^2 \leq \beta_{g} \Vert x \Vert^{2}\Longrightarrow \Vert b_x \Vert_{\ell^{2}(\mathbb{N})} \leq \sqrt{\beta_g}\Vert x \Vert.
       \end{align*}
    Let $C = \max\{\sqrt{\beta_f}, \sqrt{\beta_g}\}$. It follows that  $\Vert a_x \Vert_{\ell^{2}(\mathbb{N})} \leq C\Vert x \Vert$  and \\ $\Vert b_x \Vert_{\ell^{2}(\mathbb{N})} \leq C\Vert x \Vert$. Furthermore,
    $$\sum _{n \in \mathbb{N}}(a_nx_n +b_ny_n) = \sum_{n \in \mathbb{N}} \left( \langle x, f_n\rangle x_n  + \langle x, g_n\rangle y_n \right) = T(x),$$ This proves $(a).$
    Now, let's prove $(b).$ Indeed, let $\{c_n\}_{n \in \mathbb{N}},$ $\{d_n\}_{n \in \mathbb{N}} \in \ell^{2}(\mathbb{N})$, then for $N > M$ and since $\mathbb{X}$ is a sequence of Bessel functions with bound $\beta$, we have that:
    \begin{align*}
        \left \Vert \sum_{n=1}^{N}c_nx_n - \sum_{n=1}^{M}c_nx_n \right \Vert &= \left \Vert \sum_{n=M+1}^{N}c_nx_n \right \Vert = \substack{\sup\\ \Vert y \Vert =1 } \left| \left\langle\sum_{n=M+1}^{N} c_nx_n, y \right\rangle  \right|\\
        &=\substack{\sup\\ \Vert y \Vert =1 } \left| \sum_{n=M+1}^{N}\left\langle c_nx_n, y \right\rangle  \right|\\
        &\leq \substack{\sup\\ \Vert y \Vert =1 }\left\{ \left(\sum_{n=M+1}^{N}|c_n|^2 \right)^{1/2} \left(\sum_{n=M+1}^{N}|\langle x_n,y|^2 \right)^{1/2}\right\}\\
        &\leq \left(\sum_{n=M+1}^{N}|c_n|^2 \right)^{1/2}  \substack{\sup\\ \Vert y \Vert =1 }\left\{ \left(\sum_{n=M+1}^{N}|\langle x_n,y|^2 \right)^{1/2}\right\}\\
        &\leq \left(\sum_{n=M+1}^{N}|c_n|^2 \right)^{1/2} \sqrt{\beta}
    \end{align*}
    Since $\{c_n\}_{n \in \mathbb{N}} \in \ell^{2}(\mathbb{N}),$ then $\sum \limits_{n \in \mathbb{N}}c_nx_n $ is a Cauchy sequence in $H$ and therefore converges. Similarly, $\sum\limits_{n \in \mathbb{N}}d_ny_n$ converges; therefore
    \vspace{-10pt}
    $$\sum _{n \in \mathbb{N}}c_nx_n + \sum _{n \in \mathbb{N}}d_ny_n  = \sum _{n \in \mathbb{N}}(c_nx_n +d_ny_n)$$ 

\vspace{-10pt}
    and thus the series $\sum _{n \in \mathbb{N}}(c_nx_n + d_ny_n)$ converges in $H.$\\
    
    \item[$(ii) \Rightarrow (i)$] Suppose that $(a)$ and $(b)$ hold; let us see that $(\mathbb{X}, \mathbb{Y})$ is a bi-atomic system for $T$. For $k \in \mathbb{N}$, consider the family of operators\\ $W_k: \ell^{2}(\mathbb{N}) \times \ell^{2}(\mathbb{N}) \longrightarrow H $ given by 
    \vspace{-5pt}
    $$W_k\left(\left(\{c_n\}_{n \in \mathbb{N}} , \{d_n\}_{n \in \mathbb{N}}\right) \right) =  \sum \limits _{n=1}^{k}(c_nx_n + d_ny_n). $$ Let us show that the operators $W_k$ are linear and bounded. Indeed.
    
Let $\left(\{c_n\}_{n \in \mathbb{N}} , \{d_n\}_{n \in \mathbb{N}}\right),$ $\left(\{k_n\}_{n \in \mathbb{N}} , \{p_n\}_{n \in \mathbb{N}}\right) \in \ell^{2}(\mathbb{N}) \times \ell^{2}(\mathbb{N})$ and $\lambda \in \mathbb{C},$ then
        \begin{align*}
            &W_k\left(\left(\{c_n\}_{n \in \mathbb{N}} , \{d_n\}_{n \in \mathbb{N}}\right) + \lambda\left(\{k_n\}_{n \in \mathbb{N}} , \{p_n\}_{n \in \mathbb{N}}\right) \right)\\
           &=W_k\left(\left(\{c_n\}_{n \in \mathbb{N}}+ \lambda \{k_n\}_{n \in \mathbb{N}} , \{d_n\}_{n \in \mathbb{N}} + \lambda\{p_n\}_{n \in \mathbb{N}}\right) \right)\\
           &=\sum \limits _{n=1}^{k}(( c_n +\lambda k_n)x_n + (\alpha d_n +\lambda p_n)y_n)\\
           &=  \sum \limits _{n=1}^{k} (c_nx_n+d_ny_n)+ \lambda \sum \limits _{n=1}^{k}(k_nx_n+p_ny_n)\\
           &= W_k\left(\left(\{c_n\}_{n \in \mathbb{N}} , \{d_n\}_{n \in \mathbb{N}}\right) \right) + \lambda W_k\left(\left(\{k_n\}_{n \in \mathbb{N}} , \{p_n\}_{n \in \mathbb{N}}\right) \right)   
        \end{align*}
    Therefore, the $W_k$ are linear. Furthermore,

    \begin{align*}
        \|W_k\left(\left(\{c_n\}_{n \in \mathbb{N}} , \{d_n\}_{n \in \mathbb{N}}\right) \right)\| &= \left\|\sum \limits _{n=1}^{k}(c_nx_n + d_ny_n)\right\| \leq \left\|\sum \limits \limits _{n=1}^{k}c_nx_n \right\| + \left\|\sum \limits _{n=1}^{k} d_ny_n\right\|.
    \end{align*}

    But,

    \begin{align*}
      \left\|\sum \limits _{n=1}^{k}c_nx_n \right\| &= \sup_{\|y\|=1}\left|\langle \sum \limits _{n=1}^{k}c_{n}x_{n} , y\rangle  \right|  = \sup_{\|y\|=1}\left|\sum \limits _{n=1}^{k}c_{n}\langle x_{n} , y\rangle  \right|\\
      &\leq \sup_{\|y\|=1} \left\{\left(\sum \limits _{n=1}^{k}|c_{n}|^2 \right)^{1/2} \left(\sum \limits _{n=1}^{k}|\langle x_{n} , y\rangle|^2 \right)^{1/2}\right\}\\
      &\leq \left(\sum \limits _{n=1}^{k}|c_{n}|^2 \right)^{1/2} \sup_{\|y\|=1} \left\{ \left(\sum \limits _{n=1}^{k}|\langle x_{n} , y\rangle|^2 \right)^{1/2}\right\}\\
      &\leq \left(\sum \limits _{n=1}^{k}|c_{n}|^2 \right)^{1/2} \sup_{\|y\|=1} \left\{ \left(\sum \limits _{n=1}^{k}\|y\|^2 \|x_n\|^2 \right)^{1/2}\right\}\\
      &\leq \left(\sum \limits _{n=1}^{k}|c_{n}|^2 \right)^{1/2}  \left(\sum \limits _{n=1}^{k} \|x_n\|^2 \right)^{1/2}
      = \|\{c_n\}\|_{\ell^{2}(\mathbb{N})} \left(\sum \limits _{n=1}^{k} \|x_n\|^2 \right)^{1/2}
    \end{align*}

    Now, $\left(\sum \limits _{n=1}^{k} \|x_n\|^2 \right)^{1/2}$ is finite for all $k\in \mathbb{N},$ so we can write\\ $M_k^{\mathbb{X}} :=\left (\sum \limits _{n=1}^{k} \|x_n\|^2 \right)^{1/2}$, and thus we have that $\left\|\sum \limits _{n=1}^{k}c_nx_n \right\| \leq M_k^{\mathbb{X}}\|\{c_n\}\|_{\ell^{2}(\mathbb{N})}.
    $ Similarly, it follows that  $\left\|\sum \limits _{n=1}^{k}d_ny_n \right\| \leq M_k^{\mathbb{Y}}\|\{d_n\}\|_ {\ell^{2}(\mathbb{N})},$ so that \\
    $\left\|\sum \limits \limits _{n=1}^{k}c_nx_n \right\| + \left\|\sum \limits _{n=1}^{k} d_ny_n\right\| \leq \max 
    \{M_k^{\mathbb{X}}, M_k^{\mathbb{Y}}\}  \left(\|\{c_n\}\|_{\ell^{2}(\mathbb{N})} + \|\{d_n\}\|_{\ell^{2}(\mathbb{N})} \right)$\\
    
    This ensures that\\
    
    $\|W_k\left(\left(\{c_n\}_{n \in \mathbb{N}} , \{d_n\}_{n \in \mathbb{N}}\right) \right)\| \leq \max \{M_k^{\mathbb{X}},M_k^{\mathbb{Y}}\}  \left(\|\{c_n\}\|_{\ell^{2}(\mathbb{N})} + \|\{d_n\}\|_{\ell^{2}(\mathbb{N})} \right).$ That is, the $W_k$ are bounded.
     In turn, the limit, $$  \lim \limits_{k\to \infty} W_k\left(\{c_n\}_{n \in \mathbb{N}} , \{d_n\}_{n \in \mathbb{N}} \right) = \sum \limits _{n \in \mathbb{N}}(c_nx_n + d_ny_n)$$
    exists in $H$, and we will denote it by $W\left(\{c_n\}_{n \in \mathbb{N}} , \{d_n\}_{n \in \mathbb{N}} \right).$  Thus, since the operators $W_k$ are bounded and converge pointwise to $W,$ then by the Banach–Steinhaus Theorem \eqref{Teorema de Banach-Steinhaus} It follows that $W$ is linear and bounded.
    
Now, according to the Proposition \eqref{prop:adjunto}, exist $W^*:H\to\ell^{2}(\mathbb{N})\times\ell^{2}(\mathbb{N})$ bounded with $\|W^*\|=\|W\|$ such that for all $(\{c_n\}_{n \in \mathbb{N}},\{d_n\}_{n \in \mathbb{N}}) \in \ell^{2}(\mathbb{N})\times\ell^{2}(\mathbb{N})$ and all $h \in H$
\begin{equation*}\label{eq:adj}
\big\langle W\big((\{c_n\}_{n \in \mathbb{N}},\{d_n\}_{n \in \mathbb{N}})\big),h\big\rangle=\big\langle(\{c_n\}_{n \in \mathbb{N}},\{d_n\}_{n \in \mathbb{N}}),W^*h\big\rangle.
\end{equation*}
Next, let's write $W^*(h)=(\{u_n\}_{n \in \mathbb{N}},\{v_n\}_{n \in \mathbb{N}})$ con $(\{u_n\}_{n \in \mathbb{N}},\{v_n\}_{n \in \mathbb{N}}) \in \ell^{2}(\mathbb{N})\times\ell^{2}(\mathbb{N})$. For $m\in\mathbb{N}$ let $e_m=\{\delta_{mn}\}_{n\in\mathbb{N}}$, where $\delta_{mn}=1$ if $m=n$ and $\delta_{mn}=0$ if $m\neq n$ (the sequence with a $1$ in position $m$ and zeros in the other positions). Then $W\big((\{\delta_{mn}\}_{n\in\mathbb{N}},\{0\}_{n\in\mathbb{N}})\big)=x_m$ and by the Definition \eqref{norma l2xl2} we have
\begin{align*}
    \langle x_m,h\rangle &=\langle W\big((\{\delta_{mn}\}_{n\in\mathbb{N}},\{0\}_{n\in\mathbb{N}})\big),h\rangle\\
    &=\langle(\{\delta_{mn}\}_{n\in\mathbb{N}},\{0\}_{n\in\mathbb{N}}),(\{u_n\}_{n \in \mathbb{N}},\{v_n\}_{n \in \mathbb{N}})\rangle\\
    &=\langle e_m,\{u_n\}_{n\in\mathbb{N}}\rangle_{\ell^2(\mathbb{N})}=\overline{u_m}.
\end{align*}
that is, $u_m=\langle h,x_m\rangle$. Similarly, using $W\big((\{0\}_{n\in\mathbb{N}},\{\delta_{mn}\}_{n\in\mathbb{N}})\big)=y_m$, we obtain $v_m=\langle h,y_m\rangle$. Thus,
\[
W^*(h)=\big(\{\langle h,x_n\rangle\}_{n\in\mathbb{N}},\{\langle h,y_n\rangle\}_{n\in\mathbb{N}}\big),
\]
and by the Definition \eqref{norma l2xl2},
\[
\sum_{n\in\mathbb{N}}|\langle h,x_n\rangle|^2+\sum_{n\in\mathbb{N}}|\langle h,y_n\rangle|^2=\|W^*(h)\|^2\le\|W\|^2\|h\|^2 .
\]
Each of the two sums is less than or equal to $\|W\|^2\|h\|^2$, so $\mathbb{X}$ and $\mathbb{Y}$ are Bessel sequences with bound $\|W\|^2$.
 
\smallskip
 On the other hand, by  (a), there exist $a_x=\{a_n\}_{n\in\mathbb{N}}, b_x=\{b_n\}_{n\in\mathbb{N}}\in\ell^{2}(\mathbb{N})$ such that the series $\sum\limits_ {n\in\mathbb{N}}(a_nx_n+b_ny_n)$ converges to $T(x)$. Its partial sums are $W_k\big((\{a_n\}_{n\in\mathbb{N}},\{b_n\}_{n\in\mathbb{N}})\big)$, so,
\[
T(x)=\lim_{k\to\infty}W_k\big((\{a_n\}_{n\in\mathbb{N}},\{b_n\}_{n\in\mathbb{N}})\big)\in\mathcal{R}(W).
\]
Therefore, $\mathcal{R}(T)\subset\mathcal{R}(W)$. We apply Douglas's Theorem \eqref{thm: Douglas} with \\ $H_1=H$, $H_2=\ell^{2}(\mathbb{N})\times\ell^{2}(\mathbb{N})$, $T_1=T\in B(H)$ and $T_2=W\in B(H_2,H),$ therefore, it exists $D\in B(H,\ell^{2}(\mathbb{N})\times\ell^{2}(\mathbb{N}))$, linear and bounded, such that
\[
T=WD .
\]

For each $x\in H,$ $T(x)=W(D(x)),$ so $D(x) \in \ell^{2}(\mathbb{N})\times\ell^{2}(\mathbb{N})$; we define $A_n(x)$ as the $n$th term of the first sequence and $B_n(x)$ the $n$th term of the second sequence, that is,
\[
D(x)=\big(\{A_n(x)\}_{n\in\mathbb{N}},\{B_n(x)\}_{n\in\mathbb{N}}\big).
\]
 
Since $D$ is linear, $A_n$ and $B_n$ are linear functionals (compositions of $D$ with a linear functional). Furthermore,
\[
|A_n(x)|\le\|Dx\|\le\|D\|\,\|x\|,\qquad |B_n(x)|\le\|Dx\|\le\|D\|\,\|x\|,
\]
then $A_n$ and $B_n$ are bounded. By Riesz's Representation Theorem \eqref{Teorema-representacion-Riesz}, there exist unique $f_n, g_n \in H$ such that for every$x\in H$
\[
A_n(x)=\langle x,f_n\rangle,\qquad B_n(x)=\langle x,g_n\rangle.
\]
Let $\mathbb{F}=\{f_n\}_{n\in\mathbb{N}}$ and $\mathbb{G}=\{g_n\}_{n\in\mathbb{N}},$ thus  $D(x)=(\{\langle x,f_n\rangle\},\{\langle x,g_n\rangle\})$ and by the Definition \eqref{norma  l2xl2}
\[
\sum_{n\in\mathbb{N}}|\langle x,f_n\rangle|^2+\sum_{n\in\mathbb{N}}|\langle x,g_n\rangle|^2=\|Dx\|^2\le\|D\|^2\|x\|^2 .
\]
Therefore, $F$ and $G$ are Bessel sequences with bound $\|D\|^2$. Finally, by the definition of $W$,
\begin{align*}
    T(x)&=W(Dx)=\lim_{k\to\infty}\sum_{n=1}^{k}\big(\langle x,f_n\rangle x_n+\langle x,g_n\rangle y_n\big)\\
    &=\sum_{n\in\mathbb{N}}\big(\langle x,f_n\rangle x_n+\langle x,g_n\rangle y_n\big), \quad\forall x\in H .
\end{align*} 
 which was all that was needed to conclude that $(\mathbb{X},\mathbb{Y})$ is a bi-atomic system for $T$.
    \end{description} 
 \end{proof}

 \begin{theorem}\label{thm:proyector}
Let $(H, \langle\cdot,\cdot\rangle)$ be a Hilbert space and $P \in B(H)$ an orthogonal projector.
If $(\{x_n\}_{n \in \mathbb{N}}, \{y_n\}_{n \in \mathbb{N}})$ is a bi-atomic system for $T \in B(H)$
in $H$, then
\begin{itemize}
    \item[$(i)$] $(\{Px_n\}_{n \in \mathbb{N}}, \{Py_n\}_{n \in \mathbb{N}})$ is a bi-atomic system
for $PT$ in $PH$
\item[$(ii)$] $(\{(I-P)x_n\}_{n \in \mathbb{N}}, \{(I-P)z_n\}_{n \in \mathbb{N}})$ is a
bi-atomic system for $(I-P)T$ in $(I-P)H$.
\end{itemize}
\end{theorem}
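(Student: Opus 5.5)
The plan is to apply $P$ to the representation of $T$ and then check the two conditions of Definition \ref{sistema biatomico} in the Hilbert space $PH$. Since $P$ is an orthogonal projection, $PH$ is a closed subspace of $H$, hence a Hilbert space with the restricted inner product. We also have $P=P^*=P^2$ and $\|P\|\le 1$. The operator $PT$ is regarded as acting on $PH$ by restriction, i.e.\ as $PT|_{PH}\in B(PH)$. By hypothesis there are Bessel sequences $\mathbb{F}=\{f_n\}$ and $\mathbb{G}=\{g_n\}$ with
\[
T(x)=\sum_{n\in\mathbb{N}}\bigl(\langle x,f_n\rangle x_n+\langle x,g_n\rangle y_n\bigr),\qquad x\in H .
\]

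First I verify that $\{Px_n\}$ and $\{Py_n\}$ are Bessel sequences in $PH$. For $h\in PH$, self-adjointness gives $\langle h,Px_n\rangle=\langle Ph,x_n\rangle=\langle h,x_n\rangle$. Hence $\sum_n|\langle h,Px_n\rangle|^2\le \beta_{\mathbb{X}}\|h\|^2$, and the same argument works for $\{Py_n\}$.

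Next I build the auxiliary sequences $\{Pf_n\}$ and $\{Pg_n\}$. They are Bessel in $PH$ by the same computation. For $x\in PH$ we have $\langle x,Pf_n\rangle=\langle x,f_n\rangle$. Since $P$ is bounded, it commutes with the convergent series. Therefore
\[
PT(x)=\sum_{n\in\mathbb{N}}\bigl(\langle x,f_n\rangle Px_n+\langle x,g_n\rangle Py_n\bigr)=\sum_{n\in\mathbb{N}}\bigl(\langle x,Pf_n\rangle Px_n+\langle x,Pg_n\rangle Py_n\bigr),
\]
which is exactly condition (2) of Definition \ref{sistema biatomico}. As an alternative, one may keep $f_n,g_n$ themselves for $x\in PH$; the version with $Pf_n$ is cleaner because all vectors then lie in $PH$.

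Part $(ii)$ follows by applying part $(i)$ to $Q=I-P$, which is again an orthogonal projection. I read the $z_n$ in the statement as a typo for $y_n$. The only genuinely delicate point is interpretive: "bi-atomic for $PT$ in $PH$" has to be read with $PT$ restricted to $PH$ (or, equivalently, the representation required only for $x\in PH$). Once that reading is fixed, every step is a direct consequence of $P=P^*$ and the continuity of $P$.
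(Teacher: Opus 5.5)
Your proposal is correct and follows essentially the same route as the paper. It gets the Bessel property in $PH$ from $\langle h,Px_n\rangle=\langle Ph,x_n\rangle=\langle h,x_n\rangle$, uses $\{Pf_n\},\{Pg_n\}$ as auxiliary sequences, pushes $P$ through the convergent series, and obtains part $(ii)$ by replacing $P$ with $I-P$. Your two interpretive remarks also match what the paper's proof does implicitly: that $PT$ is understood as restricted to $PH$, and that $z_n$ is a typo for $y_n$.
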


\begin{proof}
  \begin{description}
       \item[$(i)$] Since $(\mathbb{X}=\{x_n\}_{n \in \mathbb{N}},\mathbb{Y}= \{y_n\}_{n \in \mathbb{N}})$ is a bi-atomic system for $T$, there exist Bessel sequences
$\{f_n\}$ (with bound $B_f$) and $\{g_n\}$ (with bound $B_g$) in $H$ such that:
\[
  T(x) = \sum_{n \in \mathbb{N}}\bigl(\langle x, f_n\rangle x_n + \langle x, g_n\rangle y_n\bigr),
  \ \forall\, x \in H.   \tag{1}
\] 
Let $z \in PH$; then there exists $x \in H$ such that $z = P(x)$, so\\
$P(z) = P(P(x)) = P^2(x) = P(x) = z$. Therefore:
\[
  \sum_{n \in \mathbb{N}}|\langle z, Px_n\rangle|^2
  = \sum_{n \in \mathbb{N}}|\langle Pz, x_n\rangle|^2
  = \sum_{n \in \mathbb{N}}|\langle z, x_n\rangle|^2
  \leq B_x\|z\|^2,
\]
Then $\{Px_n\}_{n \in \mathbb{N}}$ is a Bessel sequence in $PH$ with norm $B_x$. Similarly, $\{Py_n\}_{n \in \mathbb{N}}$ is a Bessel sequence in $PH$
with norm $B_y$. Similarly, it can be shown that $\{Pf_n\}_{n \in \mathbb{N}}$ and $\{Pg_n\}_{n \in \mathbb{N}}$ are Bessel sequences with bounds $B_f$ and $B_g$. Now,
let $z \in PH$; then there exists $x \in H$ such that $z = Px$. Applying $P$ to $(1)$ with $Px$ in place of
$x$, and using the fact that $P^2 = P$ and $P^* = P$, we obtain:
\begin{align*}
  PT(z) &= PT(P(x))\\
  &= P\left( \sum_{n \in \mathbb{N}}\bigl(\langle Px, f_n\rangle x_n
                                  + \langle Px, g_n\rangle y_n\bigr) \right) \\
     &= \sum_{n \in \mathbb{N}}\bigl(\langle Px, f_n\rangle Px_n
                                 + \langle Px, g_n\rangle Py_n\bigr) \\
      &= \sum_{n \in \mathbb{N}}\bigl(\langle P^2x, f_n\rangle Px_n
                                 + \langle P^2x, g_n\rangle Py_n\bigr)\\
      &= \sum_{n \in \mathbb{N}}\bigl(\langle Px, Pf_n\rangle Px_n
                                 + \langle Px, Pg_n\rangle Py_n\bigr) \\
      &= \sum_{n \in \mathbb{N}}\bigl(\langle z, Pf_n\rangle Px_n
                                 + \langle z, Pg_n\rangle Py_n\bigr).
\end{align*}
Therefore, $(\{Px_n\}_{n \in \mathbb{N}}, \{Py_n\}_{n \in \mathbb{N}})$ is a bi-atomic system for $PT$ in $PH.$

       \item[$(ii)$] Since $I - P$ is also an orthogonal projector, the same argument applied with $I - P$ in
instead of $P$ proves that $(\{(I-P)x_n\}_{n \in \mathbb{N}}, \{(I-P)y_n\}_{n \in \mathbb{N}})$ is a bi-atomic system for $(I-P)T$ in $(I-P)H$.
\end{description}
\end{proof}

\begin{theorem}\label{thm:unitario}
Let $(H, \langle\cdot,\cdot\rangle)$ be a Hilbert space, $(\{x_n\}_{n \in \mathbb{N}},
\{y_n\}_{n \in \mathbb{N}})$ a bi-atomic system for $T \in B(H)$ on $H$, and $L \in B(H)$
a unitary operator that commutes with $T$, that is, $LT = TL$. Then
$(\{Lx_n\}_{n \in \mathbb{N}}, \{Ly_n\}_{n \in \mathbb{N}})$ is a bi-atomic system for $T$ on $H$.
\end{theorem}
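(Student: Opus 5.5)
Since $(\{x_n\}_{n\in\mathbb{N}},\{y_n\}_{n\in\mathbb{N}})$ is a bi-atomic system for $T$, Definition \ref{sistema biatomico} gives Bessel sequences $\{f_n\}_{n\in\mathbb{N}}$ and $\{g_n\}_{n\in\mathbb{N}}$ with
\[
T(x)=\sum_{n\in\mathbb{N}}\bigl(\langle x,f_n\rangle x_n+\langle x,g_n\rangle y_n\bigr),\qquad x\in H .
\]
The candidate auxiliary sequences for $(\{Lx_n\},\{Ly_n\})$ are $\{Lf_n\}_{n\in\mathbb{N}}$ and $\{Lg_n\}_{n\in\mathbb{N}}$. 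The guiding identity is $T=LTL^{*}$, which holds because $L$ is unitary and commutes with $T$: indeed $LTL^{*}=TLL^{*}=T$.

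The first step is to check the Bessel conditions. For any $x\in H$,
\[
\sum_{n\in\mathbb{N}}|\langle x,Lx_n\rangle|^2=\sum_{n\in\mathbb{N}}|\langle L^{*}x,x_n\rangle|^2\le \beta_x\|L^{*}x\|^2=\beta_x\|x\|^2,
\]
since $L^{*}$ is an isometry. So $\{Lx_n\}$ is Bessel with the same bound $\beta_x$. The same computation shows that $\{Ly_n\}$, $\{Lf_n\}$ and $\{Lg_n\}$ are Bessel sequences with the bounds of the original sequences.

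The second step is the representation. Fix $x\in H$ and apply the expansion of $T$ to $L^{*}x$:
\[
T(L^{*}x)=\sum_{n\in\mathbb{N}}\bigl(\langle L^{*}x,f_n\rangle x_n+\langle L^{*}x,g_n\rangle y_n\bigr).
\]
Next apply $L$. Since $L$ is bounded and linear, it passes through the limit of the partial sums, which gives
\[
L T L^{*}x=\sum_{n\in\mathbb{N}}\bigl(\langle x,Lf_n\rangle Lx_n+\langle x,Lg_n\rangle Ly_n\bigr),
\]
where we used $\langle L^{*}x,f_n\rangle=\langle x,Lf_n\rangle$. Finally, $LTL^{*}x=TLL^{*}x=T(x)$, which is exactly the representation required by Definition \ref{sistema biatomico}.

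The main point needing care is justifying that $L$ can be moved inside the infinite series. This follows because the series converges in norm (as a limit of partial sums) and $L$ is continuous. The rest is routine, and it is here that both hypotheses are used: commutation gives $LTL^{*}=T$, and unitarity gives $LL^{*}=I$ together with the isometry property needed for the Bessel bounds.
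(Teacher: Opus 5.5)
Your proof is correct and follows the paper's own argument essentially step for step: you obtain the Bessel bounds for $\{Lx_n\}$, $\{Ly_n\}$, $\{Lf_n\}$, $\{Lg_n\}$ from the isometry $L^{*}$, and you get the representation from $T=LTL^{*}$ by expanding $TL^{*}x$ and moving $L$ inside via $\langle L^{*}x,f_n\rangle=\langle x,Lf_n\rangle$. Your explicit appeal to the continuity of $L$ to pass it through the norm-convergent series fills in a step the paper leaves implicit.
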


\begin{proof}
Since $(\{x_n\}_{n \in \mathbb{N}}, \{y_n\}_{n \in \mathbb{N}})$ is a bi-atomic system for $T$, there exist Bessel sequences
$\{f_n\}_{n \in \mathbb{N}}$ (with bound $B_f$) and $\{g_n\}_{n \in \mathbb{N}}$ (with bound $B_g$) in $H$ such that
\[
  T(x) = \sum_{n \in \mathbb{N}}\bigl(\langle x, f_n\rangle x_n + \langle x, g_n\rangle y_n\bigr)
  \qquad \forall\, x \in H. \tag{2}
\]
Since $L$ is unitary, $L^*L = LL^* = I$, and in particular $\|L^*x\| = \|x\|$ for all
$x \in H$, then
\[
  \sum_{n \in \mathbb{N}}|\langle x, Lx_n\rangle|^2
  = \sum_{n \in \mathbb{N}}|\langle L^*x, x_n\rangle|^2
  \leq B_x\|L^*x\|^2
  = B_x\|x\|^2,
\]
Then
$\{Lx_n\}_{n \in \mathbb{N}}$ is a Bessel sequence with modulus $B_x$. Similarly, $\{Ly_n\}_{n \in \mathbb{N}}$ is a Bessel sequence with modulus $B_y$. Similarly, it can be shown that $\{Lf_n\}_{n \in \mathbb{N}}$ and $\{Lg_n\}_{n \in \mathbb{N}}$ are sequences of Bessel functions with bounds $B_f$ and $B_g$.
Since $LT = TL$ and $L$ is unitary ($L^{-1} = L^*$), for all $x \in H$:
\[
  T(x) = T(LL^*(x)) = (TL)L^*(x) = (LT)L^*(x) = L(TL^*(x)).
\]
Applying $(2)$ with $L^*(x)$ in place of $x$:
\[
  TL^*(x) = \sum_{n \in \mathbb{N}}\bigl(\langle L^*(x), f_n\rangle x_n
                                  + \langle L^*(x), g_n\rangle y_n\bigr).
\]
Next,
\begin{align*}
  T(x) = L(TL^*(x))
     &= L\left(\sum_{n \in \mathbb{N}}\bigl(\langle L^*(x), f_n\rangle x_n
                                 + \langle L^*(x), g_n\rangle z_n\bigr)\right) \\
    & = \sum_{n \in \mathbb{N}}\bigl(\langle L^*(x), f_n\rangle Lx_n
                                + \langle L^*(x), g_n\rangle Ly_n\bigr)\\
     &= \sum_{n \in \mathbb{N}}\bigl(\langle x, Lf_n\rangle Lx_n
                               + \langle x, Lg_n\rangle Ly_n\bigr),
  \qquad \forall\, x \in H.
\end{align*}

Therefore, $(\{Lx_n\}, \{Lz_n\})$ is a bi-atomic system for $T$ in $H.$
\end{proof}
The following proposition establishes a connection between Bessel biframes and bi-atomic systems.
\begin{proposition}\label{bimarco bessel implica biatomico}
   Let $(H, \langle\cdot,\cdot\rangle)$ be a Hilbert space, $\mathbb{X}=\{x_n\}_{n \in \mathbb{N}},
\mathbb{Y}=\{y_n\}_{n \in \mathbb{N}}$ Bessel sequences in $\mathcal{H},$ with bounds $\beta_{\mathbb{X}}$ and $\beta_{\mathbb{Y}}$, respectively. If $(\mathbb{X}, \mathbb{Y})$ is a bimarco for $\mathcal{H},$ then $(\mathbb{X}, \mathbb{Y})$ is a bi-atomic system for the identity operator $I$ of $\mathcal{H}.$
\end{proposition}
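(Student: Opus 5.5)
The plan is to build the auxiliary sequences of Definition \ref{sistema biatomico} directly from the bimarco operator. By Proposition \ref{propiedades del operador bimarco}, $S_{\mathbb{X}\mathbb{Y}}(x)=\sum_{n\in\mathbb{N}}\langle x,x_n\rangle y_n$ is well defined, bounded and invertible on $\mathcal{H}$, so $S_{\mathbb{X}\mathbb{Y}}^{-1}\in B(\mathcal{H})$. Condition (1) of Definition \ref{sistema biatomico} is part of the hypothesis, since $\mathbb{X}$ and $\mathbb{Y}$ are Bessel with bounds $\beta_{\mathbb{X}}$ and $\beta_{\mathbb{Y}}$. Only condition (2) for $T=I$ remains to be checked.

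For each $x\in\mathcal{H}$ I would write $x=S_{\mathbb{X}\mathbb{Y}}\bigl(S_{\mathbb{X}\mathbb{Y}}^{-1}x\bigr)$ and expand:
\[
x=\sum_{n\in\mathbb{N}}\langle S_{\mathbb{X}\mathbb{Y}}^{-1}x,x_n\rangle y_n=\sum_{n\in\mathbb{N}}\bigl\langle x,(S_{\mathbb{X}\mathbb{Y}}^{-1})^{*}x_n\bigr\rangle y_n .
\]
The second equality uses Proposition \ref{prop:adjunto} termwise. The series converges because it is exactly $S_{\mathbb{X}\mathbb{Y}}$ evaluated at a vector of $\mathcal{H}$; this also follows from the Bessel property of $\mathbb{Y}$, as in Theorem \ref{equivalencia para biatomicos}.

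Accordingly, I would choose the auxiliary sequences $f_n=0$ for all $n$, and $g_n=(S_{\mathbb{X}\mathbb{Y}}^{-1})^{*}x_n$. By Remark \ref{obs:adjunto-bimarco}, $(S_{\mathbb{X}\mathbb{Y}}^{-1})^{*}=(S_{\mathbb{X}\mathbb{Y}}^{*})^{-1}=S_{\mathbb{Y}\mathbb{X}}^{-1}$, so equivalently $g_n=S_{\mathbb{Y}\mathbb{X}}^{-1}x_n$. With these choices,
\[
I(x)=\sum_{n\in\mathbb{N}}\bigl(\langle x,f_n\rangle x_n+\langle x,g_n\rangle y_n\bigr)\quad\text{for all } x\in\mathcal{H}.
\]

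It remains to check that $\mathbb{F}$ and $\mathbb{G}$ are Bessel sequences. For $\mathbb{F}$ this is trivial, since its sum vanishes and any positive constant is a bound. For $\mathbb{G}$,
\[
\sum_{n\in\mathbb{N}}|\langle x,g_n\rangle|^2=\sum_{n\in\mathbb{N}}|\langle S_{\mathbb{X}\mathbb{Y}}^{-1}x,x_n\rangle|^2\le\beta_{\mathbb{X}}\|S_{\mathbb{X}\mathbb{Y}}^{-1}\|^2\|x\|^2 .
\]
Hence $\mathbb{G}$ is Bessel with bound $\beta_{\mathbb{X}}\|S_{\mathbb{X}\mathbb{Y}}^{-1}\|^2$.

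The only step carrying real weight is the boundedness of $S_{\mathbb{X}\mathbb{Y}}^{-1}$, which I take from Proposition \ref{propiedades del operador bimarco}. If one preferred a self-contained justification, I would argue as follows. The lower bimarco inequality gives $\lambda\|x\|^2\le\langle S_{\mathbb{X}\mathbb{Y}}x,x\rangle\le\|S_{\mathbb{X}\mathbb{Y}}x\|\,\|x\|$, so $S_{\mathbb{X}\mathbb{Y}}$ is bounded below by $\lambda$. The same holds for its adjoint $S_{\mathbb{Y}\mathbb{X}}$, so $S_{\mathbb{X}\mathbb{Y}}$ is bijective and $\|S_{\mathbb{X}\mathbb{Y}}^{-1}\|\le\lambda^{-1}$.

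If a more symmetric representation is desired, one can alternatively average this with $x=S_{\mathbb{Y}\mathbb{X}}S_{\mathbb{Y}\mathbb{X}}^{-1}x$. This gives $f_n=\tfrac12 S_{\mathbb{X}\mathbb{Y}}^{-1}y_n$ and $g_n=\tfrac12 S_{\mathbb{Y}\mathbb{X}}^{-1}x_n$, with the same Bessel estimates.
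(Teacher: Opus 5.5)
Your argument is correct and is essentially the paper's proof. The paper uses exactly the symmetric choice you give at the end, $f_n=\tfrac12 S_{\mathbb{X}\mathbb{Y}}^{-1}y_n$ and $g_n=\tfrac12 S_{\mathbb{Y}\mathbb{X}}^{-1}x_n$, and recovers $x=\tfrac12\bigl(S_{\mathbb{Y}\mathbb{X}}S_{\mathbb{Y}\mathbb{X}}^{-1}x+S_{\mathbb{X}\mathbb{Y}}S_{\mathbb{X}\mathbb{Y}}^{-1}x\bigr)$ from $S_{\mathbb{X}\mathbb{Y}}^{*}=S_{\mathbb{Y}\mathbb{X}}$ and the invertibility in Proposition \ref{propiedades del operador bimarco}. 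Your main choice $f_n=0$, $g_n=S_{\mathbb{Y}\mathbb{X}}^{-1}x_n$ keeps only one of the two halves, which the definition permits, and it gives the explicit Bessel bound $\beta_{\mathbb{X}}\|S_{\mathbb{X}\mathbb{Y}}^{-1}\|^2$ where the paper leaves an unspecified constant $C$.
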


\begin{proof}
    Since $(\mathbb{X}, \mathbb{Y})$ is a biframe for $\mathcal{H},$ then for every $x\in \mathcal{H}$ there exist $0<\lambda \leq \beta < \infty$ such that
    \begin{equation}
\lambda \|x\|^{2} \le
\sum \limits_{n \in \mathbb{N}} \langle x,x_n\rangle \,\langle y_n,x\rangle 
\le \beta \|x\|^{2}.
    \end{equation}
    In addition, there is the biframe operator $S_{\mathbb{X}\mathbb{Y}}(x):= \sum\limits_{n \in \mathbb{N}} \langle x,x_{n}\rangle y_{n},$ which is linear, bounded, and invertible—that is, $S_{\mathbb{X}\mathbb{Y}}^{-1}$ exists. Now, let’s define the sequences\\ $\mathbb{F}= \{f_n\}_{n \in \mathbb{N}} =\{\frac{1}{2}S_{\mathbb{X}\mathbb{Y}}^{-1}(y_n)\}_{n \in \mathbb{N}}$ and $\mathbb{G}= \{g_n\}_{n \in \mathbb{N}} =\{\frac{1}{2}S_{\mathbb{Y}\mathbb{X}}^{-1}(x_n)\}_{n \in \mathbb{N}},$ then
    \begin{align*}
        \sum \limits_{n\in \mathbb{N}}\left|\langle x, f_n\rangle \right|^2 &= \sum \limits_{n\in \mathbb{N}}\left|\left\langle x, \frac{1}{2}S_{\mathbb{X}\mathbb{Y}}^{-1}(y_n)\right\rangle \right|^2 = \frac{1}{4}\sum \limits_{n\in \mathbb{N}}\left|\left\langle S_{\mathbb{Y}\mathbb{X}}^{-1} (x), y_n\right\rangle \right|^2\\
        &\leq \dfrac{1}{4} \beta_{\mathbb{Y}}\Vert S_{\mathbb{Y}\mathbb{X}}^{-1} (x) \Vert^2 \leq \beta_{\mathbb{Y}}C\Vert x \Vert^2
    \end{align*}
    This implies that $\mathbb{F}$ is a Bessel sequence. Similarly, it can be shown that $\mathbb{G}$ is also a Bessel sequence.\\
    On the other hand,
    \begin{align*}
        \sum \limits_{n\in \mathbb{N}} \left( \langle x, f_{n} \rangle x_{n} + \langle x, g_n \rangle y_{n}\right)& = \sum \limits_{n\in \mathbb{N}} \left( \left\langle x, \frac{1}{2}S_{\mathbb{X}\mathbb{Y}}^{-1}(y_n) \right\rangle x_{n} + \left\langle x, \frac{1}{2}S_{\mathbb{Y}\mathbb{X}}^{-1}(x_n) \right\rangle y_{n}\right) \\
        &= \frac{1}{2} \sum \limits_{n\in \mathbb{N}} \left( \left\langle S_{\mathbb{Y}\mathbb{X}}^{-1} (x), y_n \right\rangle x_{n} + \left\langle S_{\mathbb{X}\mathbb{Y}}^{-1} (x), x_n \right\rangle y_{n}\right)\\
        &=\frac{1}{2}\left( \sum \limits_{n\in \mathbb{N}}\left\langle S_{\mathbb{Y}\mathbb{X}}^{-1} (x), y_n \right\rangle x_{n} +  \sum \limits_{n\in \mathbb{N}} \left\langle S_{\mathbb{X}\mathbb{Y}}^{-1} (x), x_n \right\rangle y_{n}  \right)\\
        &= \frac{1}{2}\left(S_{\mathbb{Y}\mathbb{X}}(S_{\mathbb{Y}\mathbb{X}}^{-1}(x)) + S_{\mathbb{X}\mathbb{Y}}(S_{\mathbb{X}\mathbb{Y}}^{-1}(x)) \right)
         =\frac{1}{2}(x+x)\\
        &=I(x)
    \end{align*}
    Therefore, $(\mathbb{X}, \mathbb{Y})$ is a bi-atomic system for $I.$
\end{proof}

\begin{theorem}\label{thm:concatenacion_biatomica}
    Let $(H, \langle \cdot, \cdot \rangle)$ be a Hilbert space,
    $T, W \in B(H)$, and the sequences
    $\mathbb{X} = \{x_n\}_{n \in \mathbb{N}}$,
    $\mathbb{Y} = \{y_n\}_{n \in \mathbb{N}}$,
    $\mathbb{U} = \{u_n\}_{n \in \mathbb{N}}$,
    $\mathbb{V} = \{v_n\}_{n \in \mathbb{N}} \subset H$.
    If $(\mathbb{X}, \mathbb{Y})$ is a bi-atomic system for $T$ in $H$ and
    $(\mathbb{U}, \mathbb{V})$ is a bi-atomic system for $W$ in $H$,
    then
    \begin{equation*}
        (\widetilde{\mathbb{X}} = \{x_1, u_1, x_2, u_2, \ldots\}, \
        \widetilde{\mathbb{Y}} = \{y_1, v_1, y_2, v_2, \ldots\})
    \end{equation*}
    is a bi-atomic system for $(T + W)$ in $H$.
\end{theorem}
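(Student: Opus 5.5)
The plan is to build auxiliary sequences for the concatenated pair by interleaving the auxiliary sequences of the two given systems. Then verify the two conditions of Definition \ref{sistema biatomico}.

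Let $\mathbb{F}=\{f_n\}$, $\mathbb{G}=\{g_n\}$ be auxiliary Bessel sequences for $(\mathbb{X},\mathbb{Y})$ with respect to $T$. Let $\mathbb{P}=\{p_n\}$, $\mathbb{Q}=\{q_n\}$ be auxiliary Bessel sequences for $(\mathbb{U},\mathbb{V})$ with respect to $W$. Define
\[
\widetilde{\mathbb{F}}=\{f_1,p_1,f_2,p_2,\ldots\},\qquad \widetilde{\mathbb{G}}=\{g_1,q_1,g_2,q_2,\ldots\}.
\]
Written with indices, $\tilde f_{2n-1}=f_n$ and $\tilde f_{2n}=p_n$; the same pattern defines $\tilde x_k,\tilde y_k,\tilde g_k$.

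\textbf{Step 1: the Bessel property.} The Bessel property of the four interleaved sequences follows by splitting sums into odd and even indices. For instance,
\[
\sum_{k}|\langle x,\tilde x_k\rangle|^2=\sum_n|\langle x,x_n\rangle|^2+\sum_n|\langle x,u_n\rangle|^2\le(\beta_{\mathbb{X}}+\beta_{\mathbb{U}})\|x\|^2 .
\]
Rearranging is legitimate because all terms are nonnegative. The same argument applies to $\widetilde{\mathbb{Y}}$, $\widetilde{\mathbb{F}}$ and $\widetilde{\mathbb{G}}$.

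\textbf{Step 2: the representation of $T+W$.} Fix $x$. Adding the two given representations gives
\[
(T+W)(x)=\sum_n\bigl(\langle x,f_n\rangle x_n+\langle x,g_n\rangle y_n\bigr)+\sum_n\bigl(\langle x,p_n\rangle u_n+\langle x,q_n\rangle v_n\bigr).
\]
Both series converge in $H$, so their termwise sum $\sum_n(a_n+b_n)$ converges to $(T+W)(x)$. Here $a_n$ and $b_n$ denote the $n$th terms of the two series.

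\textbf{Main obstacle: the interleaved series.} I must show that $\sum_k\bigl(\langle x,\tilde f_k\rangle\tilde x_k+\langle x,\tilde g_k\rangle\tilde y_k\bigr)$ converges to the same limit. Its partial sum of even length $2N$ equals $\sum_{n\le N}(a_n+b_n)$, which converges to $(T+W)(x)$. The partial sum of odd length $2N-1$ differs from it by the single term $b_N$. Since $\sum b_n$ converges, $b_N\to 0$, so the odd partial sums have the same limit.

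Alternatively, one can avoid the ordering issue altogether. By the estimate in the proof of Theorem \ref{equivalencia para biatomicos}, the series $\sum_n c_nx_n$ converges unconditionally for coefficients in $\ell^2(\mathbb{N})$. Together with the Bessel bounds, this shows each of the four pieces converges regardless of interleaving. I would present the first argument since it is elementary.

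This shows $(\widetilde{\mathbb{X}},\widetilde{\mathbb{Y}})$ is a bi-atomic system for $T+W$ with auxiliary sequences $\widetilde{\mathbb{F}}$ and $\widetilde{\mathbb{G}}$.
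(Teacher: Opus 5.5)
Your proposal is correct and follows essentially the same route as the paper: you interleave the auxiliary sequences as $\widetilde{\mathbb{F}}=\{f_1,p_1,f_2,p_2,\ldots\}$ and $\widetilde{\mathbb{G}}=\{g_1,q_1,g_2,q_2,\ldots\}$, obtain the Bessel bounds (such as $\beta_{\mathbb{X}}+\beta_{\mathbb{U}}$) by splitting into odd and even indices, and add the two representations of $T$ and $W$. Your even/odd partial-sum argument (length-$2N$ partial sums equal $\sum_{n\le N}(a_n+b_n)$, and length-$2N-1$ ones differ from these by $b_N\to 0$) supplies the justification that the interleaved series converges to $(T+W)(x)$, a step the paper only asserts by noting that the two separate series converge.
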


\begin{proof}
    Since $(\mathbb{X}, \mathbb{Y})$ is a bi-atomic system for $T$ and $(\mathbb{U}, \mathbb{V})$ is a bi-atomic system for $W$,
    there exist Bessel sequences $\mathbb{F} = \{f_n\}_{n \in \mathbb{N}}$
    (with bound $\beta_\mathbb{N},$)  $\mathbb{G} = \{g_n\}_{n \in \mathbb{N}}$
    (with bound $\beta_\mathbb{G}$), $\mathbb{P} = \{p_n\}_{n \in \mathbb{N}}$
    (with bound $\beta_\mathbb{P}$) and $\mathbb{K} = \{k_n\}_{n \in \mathbb{N}}$
    (with bound $\beta_\mathbb{K}$) in $H$ such that for every $x \in H$
    \begin{equation}\label{eq:rep_T_concat}
        T(x) = \sum_{n \in \mathbb{N}}
        \bigl(\langle x, f_n \rangle\, x_n
        + \langle x, g_n \rangle\, y_n\bigr).
    \end{equation}

    \begin{equation}\label{eq:rep_W_concat}
        W(x) = \sum_{n \in \mathbb{N}}
        \bigl(\langle x, p_n \rangle\, u_n
        + \langle x, k_n \rangle\, v_n\bigr).
    \end{equation}
    Furthermore, $\mathbb{X}$, $\mathbb{Y}$, $\mathbb{U}$, and $\mathbb{V}$ are Bessel sequences
    with coefficients $\beta_\mathbb{X}$, $\beta_\mathbb{Y}$, $\beta_\mathbb{U}$, and $\beta_\mathbb{V}$
    respectively, by condition $(1)$ of the
    Definition \eqref{sistema biatomico}.
    Let us verify the conditions of Definition \eqref{sistema biatomico} for $(\widetilde{\mathbb{X}}, \widetilde{\mathbb{Y}})$ with respect to the
    operator $(T + W)$. Indeed, let us first see that $(T + W) \in B(H)$, since for every
    $x \in H$:
    \[
        \|(T + W)(x)\|
        \leq \|T(x)\| + \|W(x)\|
        \leq M\|x\| + K\|x\| \leq \bigl(M + K\bigr)\|x\|.
    \]
 Therefore, $(T+W)$ is bounded. 
    \medskip
    
    Let us now show that $\widetilde{\mathbb{X}}$ and $\widetilde{\mathbb{Y}}$ are Bessel sequences.
    For every $x \in H$:
    \begin{align*}
        \sum_{n \in \mathbb{N}}
        |\langle x, \widetilde{x}_n \rangle|^2
        &= \sum_{n \in \mathbb{N}} |\langle x, x_n \rangle|^2
        + \sum_{n \in \mathbb{N}} |\langle x, u_n \rangle|^2 \\
        &\leq \beta_\mathbb{X} \|x\|^2 + \beta_\mathbb{U} \|x\|^2
        = (\beta_\mathbb{X} + \beta_\mathbb{U})\|x\|^2,
    \end{align*}
    Then $\widetilde{\mathbb{X}}$ is a Bessel sequence with bound
    $(\beta_\mathbb{X} + \beta_\mathbb{U})$.
    Similarly, $\widetilde{\mathbb{X}}$ is a Bessel sequence
    with bound $(\beta_\mathbb{Y} + \beta_\mathbb{V}).$
 
    \medskip

    Next, let's define the auxiliary sequences   \[
        \widetilde{\mathbb{F}} = \{f_1, p_1, f_2, p_2, \ldots\},
        \qquad
        \widetilde{\mathbb{G}} = \{g_1, k_1, g_2, k_2, \ldots\}.
    \]
    By the same argument as above,
    $\widetilde{\mathbb{F}}$ is a Bessel sequence with bound
    $\beta_\mathbb{F} + \beta_\mathbb{P}$ and $\widetilde{\mathbb{G}}$ is a Bessel sequence
    with bound $\beta_\mathbb{G} + \beta_\mathbb{K}$.
    On the other hand, for every $x \in H$:
    \begin{align*}
        \sum_{n \in \mathbb{N}}
        \bigl(
        \langle x, \widetilde{f}_n \rangle\, \widetilde{x}_n
        + \langle x, \widetilde{g}_n \rangle\, \widetilde{y}_n
        \bigr)
        &= \sum_{n \in \mathbb{N}}
        \bigl(\langle x, f_n \rangle\, x_n
        + \langle x, g_n \rangle\, y_n\bigr) 
         +\\
        &\sum_{n \in \mathbb{N}}
        \bigl(\langle x, p_n \rangle\, u_n
        + \langle x, q_n \rangle\, v_n\bigr) \\
        &= T(x) + W(x)
        = (T + W)(x),
    \end{align*}
    where the separation into two series is justified
    because each one converges to $H,$
    the first by \eqref{eq:rep_T_concat}
    and the second by \eqref{eq:rep_W_concat}.
 
    \medskip
    Therefore, $(\widetilde{\mathbb{X}}, \widetilde{\mathbb{Y}})$ is a
    bi-atomic system for $(T + W)$ in $H.$
\end{proof}

\begin{corollary}\label{cor:concat_bimarco}
    Under the assumptions of the Theorem \eqref{thm:concatenacion_biatomica} and 
    if, in addition, $(\mathbb{X}, \mathbb{Y})$ is a Bessel biframe for $H$ with bounds
    $\lambda_1 \leq \beta_1$ y $(\mathbb{U}, \mathbb{V})$ is a Bessel biframe
    for $H$ with bounds $\lambda_2 \leq \beta_2$, then
    $(\widetilde{\mathbb{X}}, \widetilde{\mathbb{Y}})$ is a Bessel biframe
    for $H$ with bounds $(\lambda_1 + \lambda_2)$ and $(\beta_1 + \beta_2).$
\end{corollary}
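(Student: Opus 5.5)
The plan is to verify the two inequalities of Definition \ref{def bimarco} for the concatenated pair $(\widetilde{\mathbb{X}},\widetilde{\mathbb{Y}})$ directly. We also check that $\widetilde{\mathbb{X}}$ and $\widetilde{\mathbb{Y}}$ are Bessel sequences. Both concatenations already appeared in the proof of Theorem \ref{thm:concatenacion_biatomica}, with bounds $\beta_{\mathbb{X}}+\beta_{\mathbb{U}}$ and $\beta_{\mathbb{Y}}+\beta_{\mathbb{V}}$, so only the biframe inequalities remain.

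The key step is to split the mixed sum for a fixed $x\in H$. Since $\widetilde{x}_{2n-1}=x_n$, $\widetilde{y}_{2n-1}=y_n$, $\widetilde{x}_{2n}=u_n$ and $\widetilde{y}_{2n}=v_n$, the terms pair up correctly (odd with odd, even with even), and
\[
\sum_{m\in\mathbb{N}}\langle x,\widetilde{x}_m\rangle\langle \widetilde{y}_m,x\rangle
=\sum_{n\in\mathbb{N}}\langle x,x_n\rangle\langle y_n,x\rangle+\sum_{n\in\mathbb{N}}\langle x,u_n\rangle\langle v_n,x\rangle .
\]
To justify this regrouping, I will note that each series converges absolutely. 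By Cauchy--Schwarz in $\ell^2(\mathbb{N})$,
\[
\sum_n|\langle x,x_n\rangle\langle y_n,x\rangle|\le\Big(\sum_n|\langle x,x_n\rangle|^2\Big)^{1/2}\Big(\sum_n|\langle x,y_n\rangle|^2\Big)^{1/2}\le\sqrt{\beta_{\mathbb{X}}\beta_{\mathbb{Y}}}\,\|x\|^2,
\]
and the same holds for $\mathbb{U},\mathbb{V}$. Absolute convergence allows the interleaved series to be split into its odd- and even-indexed parts. This is the only point requiring care, and the Bessel hypothesis (the pairs are \emph{Bessel} biframes) is exactly what makes it work.

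It remains to add the inequalities. The biframe conditions for $(\mathbb{X},\mathbb{Y})$ and $(\mathbb{U},\mathbb{V})$ imply that each of the two sums is real and lies between $\lambda_i\|x\|^2$ and $\beta_i\|x\|^2$. Adding them gives $(\lambda_1+\lambda_2)\|x\|^2\le\sum_m\langle x,\widetilde{x}_m\rangle\langle\widetilde{y}_m,x\rangle\le(\beta_1+\beta_2)\|x\|^2$. Since $0<\lambda_1+\lambda_2\le\beta_1+\beta_2<\infty$, the pair $(\widetilde{\mathbb{X}},\widetilde{\mathbb{Y}})$ is a Bessel biframe with the stated bounds.
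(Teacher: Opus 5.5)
Your proposal is correct and follows the paper's proof: you add the two biframe inequalities and identify the interleaved sum $\sum_m\langle x,\widetilde{x}_m\rangle\langle\widetilde{y}_m,x\rangle$ with the sum of the two separate series, and you take the Bessel property of $\widetilde{\mathbb{X}},\widetilde{\mathbb{Y}}$ from Theorem \ref{thm:concatenacion_biatomica}. Your justification of the regrouping fills a step the paper leaves implicit, though absolute convergence is more than you need: convergence of the two series alone suffices, since each odd partial sum of the interleaved series differs from the next even one by a single term that tends to $0$.
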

 
\begin{proof}
    By hypothesis, for all $x \in H$:
    \[
        \lambda_1 \|x\|^2
        \leq \sum_{n \in \mathbb{N}}
        \langle x, x_n \rangle \langle y_n, x \rangle
        \leq \beta_1 \|x\|^2,
        \qquad
        \lambda_2 \|x\|^2
        \leq \sum_{n \in \mathbb{N}}
        \langle x, u_n \rangle \langle v_n, x \rangle
        \leq \beta_2 \|x\|^2.
    \]
    Adding both inequalities:
    \begin{align*}
        (\lambda_1 + \lambda_2)\|x\|^2
        &\leq
        \sum_{n \in \mathbb{N}}
        \langle x, x_n \rangle \langle y_n, x \rangle
        +
        \sum_{n \in \mathbb{N}}
        \langle x, u_n \rangle \langle v_n, x \rangle \\
        &=
        \sum_{n \in \mathbb{N}}
        \langle x, \widetilde{x}_n \rangle
        \langle \widetilde{y}_n, x \rangle
        \leq (\beta_1 + \beta_2)\|x\|^2.
    \end{align*}
    Then $(\widetilde{\mathbb{X}}, \widetilde{\mathbb{Y}})$ is a Bessel biframe with $(\lambda_1 + \lambda_2)$ y $(\beta_1 + \beta_2).$
\end{proof}

\section{Conclusion}
\label{Sec:4}
This paper introduced bi-atomic systems for bounded operators in Hilbert spaces, which allow an operator to be represented by two Bessel sequences\eqref{sistema biatomico}, The notion is symmetric with respect to the pair $( \mathbb{X}, \mathbb{Y})$ \eqref{simetria de biatomicos}, This new concept was illustrated \eqref{ejemplo bi-atomico},   It was also shown that bi-atomic systems are dual-stable; that is, given a bi-atomic system for an operator, there exists a bi-atomic system for its adjoint.\eqref{thm:dualidad_biatomica},  On the other hand, an alternative way of showing that a pair $( \mathbb{X}, \mathbb{Y})$ is a bi-atomic system is established, using a characterization in terms of the boundedness of the coefficients and the convergence of series. \eqref{equivalencia para biatomicos}. Orthogonal projections generate new bi-atomic system \eqref{thm:proyector}, and the unitary operators that commute with the operator associated with the bi-atomic system \eqref{thm:unitario}. Furthermore, Bessel biframes yield bi-atomic systems for the identity operator \eqref{bimarco bessel implica biatomico} and the concatenation of bi-atomic systems for a pair of operators $T$ and $W$ yields a bi-atomic system for $(T+W)$\eqref{thm:concatenacion_biatomica},while preserving Bessel's bimarco property\eqref{cor:concat_bimarco}.

As future work, we ask whether it is possible to formulate biatomic systems in Krein spaces, following \cite{FerrerArroyo2023}, and whether the converse of Proposition \eqref{equivalencia para biatomicos} holds; similarly, we also to explore future extensions to spaces with $W$-metrics \cite{FAE, OGK, WFE}, as well as to relate bi-atomic systems to dual frameworks \cite{FE} and possible applications such as those developed for quantum detection, as discussed in \cite{Han}. In particular, it is of interest to determine whether the equivalent characterization of bi-atomic systems established in terms of the Douglas Theorem and the Riesz Representation Theorem can be formulated in these contexts, where the inner product ceases to be a scalar and instead becomes a function membership function or a parameterized family of sets. A possible application of this line of research is the modeling of vocal risk, following the ideas developed in \cite{vocal-risk-diagnosis}, where the two generating sequences of a bi-atomic system could represent two sources of clinical and acoustic information, respectively.
 
\section*{{Conflict of interest}}

The authors declare no conflict of interest

\end{document}